\theoremstyle{plain}
\newtheorem{theorem}{Theorem}[section]
\newtheorem{lemma}[theorem]{Lemma}
\newtheorem{corollary}[theorem]{Corollary}
\newtheorem{definition}{Definition}[section]
\newtheorem{conjecture}{Conjecture}
\theoremstyle{remark}
\DeclarePairedDelimiter \ceil{\lceil}{\rceil}
\begin{document}

\title{ON FINDING THE SMALLEST HAPPY NUMBERS OF ANY HEIGHTS}

\author{GABRIEL LAPOINTE}
\email{gabriel.lapointe18@gmail.com}

\begin{abstract}
This paper focuses on finding the smallest happy number for each height in any numerical base. Using the properties of the height, we deduce a recursive relationship between the smallest happy number and the height where the initial height is function of the numerical base. With the usage of the recursive relationship, we build an algorithm that exploits the properties of the height in order to find all of those smallest happy numbers with unknown height. However, with the modular arithmetic, we conclude on an equation that calculates the smallest happy numbers based on known heights for binary and ternary bases. 
\end{abstract}

\keywords{Number theory, Smallest happy numbers, Height, Algorithm, Modular arithmetic}

\subjclass[2010]{Mathematics Subject Classification 2010: 11B37, 11Y55, 11Y16, 11A07}

\maketitle

\section{Introduction} \label{sec:Introduction}
Researches have been done on smallest happy numbers mainly in decimal base for heights $0$ to $12$ using the modular arithmetic as shown in theorems 2 to 4 \cite{Grundman03} and theorem 3 \cite{Tianxin08}. Moreover, there is an algorithm that searches for the smallest happy number of an unknown height as shown in \cite{Tianxin08}. In this paper, we give and describe this algorithm usable for any numerical base $B \geq 2$. Instead of continuing to search for the smallest happy numbers for any height greater than $12$ in decimal base, we are interested on the binary and ternary bases. Small numerical bases may give hints on an equation or an efficient algorithm to obtain the smallest happy number of any height for larger numerical bases.

Let $x > 0$ be an integer. In the numerical base $B \geq 2$, $x$ can be written as a unique sum 
\begin{equation} \label{eq:DefinitionOfX}
x = \sum_{i = 0}^{L(x)-1} x_iB^i,
\end{equation}
where the positive integers $x_0,\ldots,x_{L(x)-1}$ are the digits of $x$ and $x_j = 0$ for all $j \geq L(x)$. We note $x$ as a vector of its digits in base $B$ as being $x = (x_{L(x)-1}, \ldots, x_0)_B$ where we note $L(x)$ as the length of $x$ and $x(k)$ the $k^{th}$ digit of $x$ starting from the left for $k \in \{0,1,\ldots,L(x)-1\}$.

\begin{definition} \label{dfn:HappyFunction}
Let $B \geq 2$ be the numerical base of a positive integer $x$. We define a function $H_B : \mathbb{N} \longrightarrow \mathbb{N}$ where 
\begin{equation} \label{eq:DefinitionOfH}
\mathcal{H}_B(x) = \sum_{i = 0}^{L(x)-1} x_i^2,
\end{equation}
where $\mathcal{H}_B(0) = 0$ in base $B$.
\end{definition}

To simplify the notation, we note $\mathcal{H}_B^n(x) = \mathcal{H}_B^{n-1}(x) \circ \mathcal{H}_B(x)$ for all $n \geq 1$, where $\circ$ is the function composition operator. The identity is defined as $\mathcal{H}_B^0(x) = x$.

\begin{definition} \label{dfn:HappyNumber}
Let $B \geq 2$ be the numerical base of a positive integer $x$. We call $x$ a happy number if and only if there is $n \in \mathbb{N}$ such that $\mathcal{H}_B^n(x) = 1$.
\end{definition}

\begin{definition} \label{dfn:HappyHeight}
We define the height of a happy number $x$ in base $B \geq 2$ as $\eta_B : \mathbb{N}_* \longrightarrow \mathbb{N}$ where 
\begin{equation}
\eta_B(x) = \min\{\alpha \in \mathbb{N} \; : \; \mathcal{H}^{\alpha}_B(x) = 1\}.
\end{equation}
\end{definition}

\begin{definition} \label{dfn:HappyGamma}
We define the smallest happy number in numerical base $B \geq 2$ for any height by the function $\gamma_B : \mathbb{N} \longrightarrow \mathbb{N}_*$ where 
\begin{equation}
	\gamma_B(\eta(x)) = \min \{x \in \mathbb{N}_* \; : \; \mathcal{H}^{\eta(x)}_B(x) = 1\}.
\end{equation}
\end{definition}

To simplify the notation, we will use $\eta = \eta_B(x)$ and $\gamma(\eta) = \gamma_B(\eta)$. If there is ambiguity, we will precise the numerical base (e.g. $\gamma_3(\eta_3)$).

\section{Smallest happy numbers} \label{sec:Height}
The objective of this section is to show that there is a relationship between subsequent heights of smallest happy numbers for all base $B \geq 2$. Then, we get a general equation for $\gamma$ on which the algorithm will be based.

The following result shows that the function $\gamma$ is defined uniquely for every $\eta$. 

\begin{theorem} \label{thm:ExistUniqueGamma}
For all $\eta \in \mathbb{N}$, there is a unique $x \in \mathbb{N}_*$ such that $\gamma(\eta) = x$ and $\mathcal{H}_B(\gamma(\eta + 1)) \geq \gamma(\eta)$.
\end{theorem}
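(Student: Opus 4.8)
The statement has two parts: (i) for each height $\eta$ there is a unique smallest happy number $\gamma(\eta)$, and (ii) $\mathcal{H}_B(\gamma(\eta+1)) \geq \gamma(\eta)$.

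Part (i): existence needs that for every $\eta \in \mathbb{N}$ there exists at least one happy number of height exactly $\eta$; then uniqueness of the minimum is automatic since any nonempty subset of $\mathbb{N}_*$ has a least element. I'd nail existence inductively: $\gamma(0)=1$ works; and given any $x$ with $\eta_B(x)=\eta$, I need a $y$ with $\mathcal{H}_B(y)=x$ — then $\eta_B(y)=\eta+1$. Producing such a preimage is the crux of existence: write $x = \sum x_i B^i$ and look for $y$ whose squared-digit sum equals $x$. The clean move is to take $y$ to be a string of $1$'s: the number consisting of $x$ ones has $\mathcal{H}_B = x$. (One must check this number is itself positive and that its height is genuinely $\eta+1$, i.e. it isn't $1$ — true once $\eta \geq 1$ forces $x \geq \cdots$; handle small cases by hand.) That settles part (i).

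Part (ii): Let $x = \gamma(\eta+1)$, so $\eta_B(x) = \eta+1$, hence $\eta_B(\mathcal{H}_B(x)) = \eta$. Therefore $\mathcal{H}_B(x)$ is *a* happy number of height $\eta$, and by definition of $\gamma(\eta)$ as the *smallest* such, $\mathcal{H}_B(\mathcal{H}_B(x)) \ge \ldots$ — wait, more directly: $\mathcal{H}_B(x)$ is a positive integer with $\mathcal{H}_B^{\eta}(\mathcal{H}_B(x)) = 1$, so $\mathcal{H}_B(x) \ge \min\{z : \mathcal{H}_B^{\eta}(z)=1\} = \gamma(\eta)$. That is exactly the claimed inequality, essentially by unwinding Definitions 1.3–1.4.

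The only genuine obstacle is the preimage construction in part (i): one must be sure the "repunit-like" number $y$ (with $x$ digits all equal to $1$, which may itself be a multi-digit object in base $B$ when $x \geq B$) satisfies $\mathcal{H}_B(y) = x$ exactly — its digit sum of squares is $1+1+\cdots+1 = x$ — and that composing gives height exactly $\eta+1$ rather than something smaller, which requires $y \neq 1$ and $\mathcal{H}_B^{k}(y) \neq 1$ for $k \leq \eta$; this follows because $\mathcal{H}_B^{k}(y) = \mathcal{H}_B^{k-1}(x)$ and $\eta_B(x) = \eta$. I would also remark that uniqueness in the phrase "unique $x$" refers to the value $\gamma(\eta)$ being well-defined, not to uniqueness of happy numbers of a given height (there are many), so no extra argument is needed beyond well-ordering of $\mathbb{N}_*$.
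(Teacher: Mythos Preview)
Your proposal is correct and follows essentially the same route as the paper: existence by induction using the repunit $y=(1,\ldots,1)_B$ of length $x$ as a preimage under $\mathcal{H}_B$, uniqueness from the well-ordering of $\mathbb{N}_*$, and the inequality by observing that $\mathcal{H}_B(\gamma(\eta+1))$ is itself a happy number of height $\eta$ and hence bounded below by $\gamma(\eta)$. If anything, you are slightly more careful than the paper in checking that the repunit has height \emph{exactly} $\eta+1$ and in flagging the degenerate case $x=1$ that must be handled separately.
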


\begin{proof}
To prove the existence, we have to show that there is a $x \in \mathbb{N}_*$ happy such that $\eta(x) = y$. We proceed by induction on $y$. For $y = 0$, we have $x = 1$ such that $\eta(x) = 0$ for all $B \geq 2$. Lets assume that there is a $x \in \mathbb{N}_*$ happy such that $\eta(x) = y$. Let $z = (1, \ldots, 1)_B$ such that $L(z) = x$. Thus, $\mathcal{H}_B(z) = x$ and $z$ is also happy. In virtue of the induction hypothesis, $\eta(z) = y + 1$ which proves the existence for all $y \in \mathbb{N}$. 

Let $x_1, x_2 \in \mathbb{N}_*$ where $x_1 \neq x_2$. Then, let $x_1 > x_2$ without loss of generality. Having $\gamma(\eta) = x_1$ or $\gamma(\eta) = x_2$ contradict the definition \ref{dfn:HappyGamma}. Thus, $x_1 = x_2$ which proves the uniqueness. 

We deduce from the definition \ref{dfn:HappyGamma} that $\mathcal{H}_B(\gamma(\eta + 1)) \geq \gamma(\eta)$.
\end{proof} 

We note that $1$ is the unique integer of height $0$. If $1 < x < B$, then $\mathcal{H}_B(x) = x^2 > x$. This ensures that $(1,0)_B$ is the smallest happy number of height $1$ for all $B \geq 2$. 

\begin{lemma} \label{lem:GammaUpperBound}
If $\gamma(\eta + 1) \geq B^2$, then $\gamma(\eta + 1) > \gamma(\eta)$.
\end{lemma}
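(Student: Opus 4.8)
The plan is to unpack what $\gamma(\eta+1) \geq B^2$ forces about the structure of the number $\gamma(\eta+1)$ and then invoke Theorem~\ref{thm:ExistUniqueGamma}, which already gives us $\mathcal{H}_B(\gamma(\eta+1)) \geq \gamma(\eta)$. The key observation is that if $\gamma(\eta+1) \geq B^2$, then $\gamma(\eta+1)$ has length $L(\gamma(\eta+1)) \geq 3$ in base $B$, so each digit $x_i$ satisfies $x_i \leq B-1$, and therefore $x_i^2 \leq (B-1)x_i \leq (B-1)B^i \cdot$ (something we can control). More precisely, I would compare $\mathcal{H}_B(\gamma(\eta+1)) = \sum_{i=0}^{L-1} x_i^2$ against $\gamma(\eta+1) = \sum_{i=0}^{L-1} x_i B^i$ and show the former is strictly smaller than the latter whenever $L \geq 3$.

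First I would establish the digitwise inequality: for a digit $d$ with $0 \leq d \leq B-1$, we have $d^2 \leq (B-1)d$. Summing, $\mathcal{H}_B(\gamma(\eta+1)) \leq (B-1)\sum_{i=0}^{L-1} x_i$. Then I would bound $\sum_{i=0}^{L-1} x_i$ crudely by noting that the leading digit contributes at least $B^{L-1}$ to $\gamma(\eta+1)$ while contributing at most $B-1$ to the digit sum, and the lower-order digits contribute at most $(B-1)(1 + B + \cdots + B^{L-2}) = B^{L-1} - 1$ to $\gamma(\eta+1)$ but at most $(B-1)(L-1)$ to the digit sum. The cleanest route is probably: $(B-1)\sum x_i < \sum x_i B^i$ whenever $L \geq 3$, because $B^i \geq B^2 > B-1$ for the top digit (which is nonzero) and $B^i \geq 1$ for the rest, with the top term alone giving $x_{L-1}B^{L-1} \geq B^{L-1} \geq B^2 > (B-1)L$ for the relevant range — but I would need to be careful for small $B$ and moderately large $L$, so the honest bound is the termwise one combined with the fact that at least one term has $B^i \geq B^2$.

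Once I have $\mathcal{H}_B(\gamma(\eta+1)) < \gamma(\eta+1)$, I combine it with $\gamma(\eta) \leq \mathcal{H}_B(\gamma(\eta+1))$ from Theorem~\ref{thm:ExistUniqueGamma} to conclude $\gamma(\eta) < \gamma(\eta+1)$, which is the claim. The main obstacle I anticipate is making the inequality $\sum_{i=0}^{L-1} x_i^2 < \sum_{i=0}^{L-1} x_i B^i$ genuinely rigorous for all $B \geq 2$ and all $L \geq 3$ simultaneously: the naive termwise comparison $x_i^2 \leq x_i(B-1) < x_i B^i$ fails for $i = 0$ and $i = 1$ (where $B^i \leq B \leq$ could be less than or comparable to $B-1$), so I will need to absorb the deficit from the low-order digits into the surplus from the leading digit, using that $x_{L-1} \geq 1$ and $B^{L-1} \geq B^2$. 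I expect this to work because $B^2 - (B-1) \geq 2$ grows, while the deficit from positions $0$ and $1$ is bounded by a fixed multiple of $(B-1)^2$, and for $L = 3$ specifically one checks the three-term inequality directly, with larger $L$ only adding further positive surplus.
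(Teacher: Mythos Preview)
Your approach is essentially the paper's: both combine Theorem~\ref{thm:ExistUniqueGamma}'s inequality $\mathcal{H}_B(\gamma(\eta+1)) \geq \gamma(\eta)$ with the fact that $\mathcal{H}_B(x) < x$ whenever $x \geq B^2$, yielding $\gamma(\eta) \leq \mathcal{H}_B(\gamma(\eta+1)) < \gamma(\eta+1)$. The only difference is that the paper cites this inequality as Lemma~6 of \cite{Grundman01} (and phrases the conclusion via contradiction), whereas you plan to prove it directly---your digitwise strategy works once you note that the only deficit occurs at $i=0$ (not $i=1$, since $B^1 > B-1$), giving $x - \mathcal{H}_B(x) = \sum_i x_i(B^i - x_i) \geq (B^{L-1}-1) - (B-1)(B-2) \geq 3(B-1) > 0$ for $L \geq 3$.
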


\begin{proof}
By theorem \ref{thm:ExistUniqueGamma}, $\gamma(\eta + 1) \neq \gamma(\eta)$ for all $\eta$. We proceed by contrapositive and suppose that $\gamma(\eta) > \gamma(\eta + 1)$. Since $\gamma(\eta + 1) \geq B^2$, we have, in virtue of the lemma 6 \cite{Grundman01}, that $\mathcal{H}_B(\gamma(\eta + 1)) < \gamma(\eta + 1)$. Applying this with our hypothesis implies that $\mathcal{H}_B(\gamma(\eta + 1)) < \gamma(\eta)$ a contradiction with the theorem \ref{thm:ExistUniqueGamma}. Therefore, $\gamma(\eta+1) > \gamma(\eta)$.
\end{proof}

We want to show that the smallest happy numbers are strictly increasing as their height increases starting at a certain height when $\gamma(\eta + 1) > 2(B-1)^2$. However, even if $B^2 \leq \gamma(\eta + 1) \leq 2(B-1)^2$, it is generally not true. For example, using brute-force, we calculate that $\gamma_{18}(28) = (1, 2, 17)_{18} > B^2$ and $\gamma_{18}(29) = (11, 16)_{18}$. We see that $\gamma_{18}(28) > \gamma_{18}(29)$. The next result ensure that $\{\gamma(\eta), \gamma(\eta + 1), \ldots\}$ is strictly increasing when $\gamma(\eta + 1) > 2(B-1)^2$ for all $B \geq 3$ and $\gamma(\eta + 1) \geq B^2$ for the binary base.

\begin{lemma} \label{lem:Gamma_Increasing_With_Height_Increasing}
If $\gamma(\eta + 1) > 2(B-1)^2$ for all $B \geq 3$ and $\gamma(\eta + 1) \geq B^2$ if $B = 2$, then $\gamma(\eta) < \gamma(\eta + 1) < \ldots < \gamma(\eta + k)$ for all $k \in \mathbb{N}$.
\end{lemma}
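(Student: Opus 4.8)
The plan is to argue by induction on $k$, letting Lemma~\ref{lem:GammaUpperBound} supply each single step $\gamma(\eta+j)<\gamma(\eta+j+1)$, so that the real work reduces to showing that the hypothesis on $\gamma(\eta+1)$ \emph{propagates upward}: once some $\gamma(\eta+j)$ exceeds the relevant threshold, so does $\gamma(\eta+j+1)$. The engine for propagation is Theorem~\ref{thm:ExistUniqueGamma}, which gives $\mathcal{H}_B(\gamma(\eta+j+1))\geq\gamma(\eta+j)$, combined with the observation that $\mathcal{H}_B$ cannot increase the threshold from below.

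First I would isolate the following elementary fact about $\mathcal{H}_B$. For $B\geq 3$, every integer $y$ with $1\leq y\leq 2(B-1)^2$ satisfies $\mathcal{H}_B(y)\leq 2(B-1)^2$; and for $B=2$, every integer $y$ with $\mathcal{H}_2(y)\geq 4$ satisfies $y\geq 4$. For the first claim, note that $2(B-1)^2<B^3$, so such a $y$ has length $L(y)\leq 3$. If $L(y)\leq 2$ the bound $\mathcal{H}_B(y)\leq 2(B-1)^2$ is trivial. If $L(y)=3$, the leading digit must be $1$, since a leading digit $\geq 2$ would force $y\geq 2B^2>2(B-1)^2$; moreover the two remaining digits cannot both equal $B-1$, because $(1,B-1,B-1)_B=2B^2-1>2(B-1)^2$ for all $B\geq 2$. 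Hence at least one of those digits is at most $B-2$, and $\mathcal{H}_B(y)\leq 1+(B-1)^2+(B-2)^2=2(B-1)^2-2B+4\leq 2(B-1)^2$. The claim for $B=2$ is immediate, since $\mathcal{H}_2(y)$ counts the digits of $y$ equal to $1$, so $\mathcal{H}_2(y)\geq 4$ forces $L(y)\geq 4$ and thus $y\geq 8$.

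Next I would run the induction. For $k\leq 1$ there is nothing to prove beyond $\gamma(\eta)<\gamma(\eta+1)$: the hypothesis yields $\gamma(\eta+1)\geq B^2$ in every case — directly for $B\geq 4$, where $2(B-1)^2\geq B^2$; by integrality for $B=3$, where $\gamma(\eta+1)>8$ forces $\gamma(\eta+1)\geq 9$; and by assumption for $B=2$ — so Lemma~\ref{lem:GammaUpperBound} applies. For the inductive step, assume $\gamma(\eta)<\cdots<\gamma(\eta+j)$ with $\gamma(\eta+j)>2(B-1)^2$ (respectively $\gamma(\eta+j)\geq 4$ when $B=2$). Theorem~\ref{thm:ExistUniqueGamma} gives $\mathcal{H}_B(\gamma(\eta+j+1))\geq\gamma(\eta+j)>2(B-1)^2$ (respectively $\geq 4$), and the contrapositive of the elementary fact above then forces $\gamma(\eta+j+1)>2(B-1)^2$ (respectively $\geq 4$); in either case $\gamma(\eta+j+1)\geq B^2$ (again using integrality for $B=3$). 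Lemma~\ref{lem:GammaUpperBound} now yields $\gamma(\eta+j)<\gamma(\eta+j+1)$, extending the chain and completing the induction.

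The main obstacle is the elementary fact isolated in the first step, namely pinning down the constant so that $\mathcal{H}_B$ stabilizes the initial segment $\{1,\ldots,2(B-1)^2\}$; this is precisely where the value $2(B-1)^2$ (rather than $B^2$) is forced, and also why the binary base must be handled on its own, since there $2(B-1)^2=2$ is too small for Lemma~\ref{lem:GammaUpperBound} to engage. One should also verify the small-base bookkeeping carefully: that $y\leq 2(B-1)^2$ really does cap $L(y)$ at $3$, and that for $B=3$ the three-digit case of the elementary fact is vacuous, so that the passage to $\gamma(\eta+j+1)\geq B^2$ there rests on integrality rather than on the inequality $2(B-1)^2\geq B^2$.
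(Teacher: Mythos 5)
Your proof is correct, and it shares the paper's overall skeleton (induction on $k$, with Lemma~\ref{lem:GammaUpperBound} supplying each single comparison and Theorem~\ref{thm:ExistUniqueGamma} providing $\mathcal{H}_B(\gamma(\eta+j+1))\geq\gamma(\eta+j)$), but it handles the crucial propagation step by a genuinely different and cleaner mechanism. The paper rules out $\gamma(\eta+k+1)\leq 2(B-1)^2$ by contradiction, invoking Corollary~7 of Grundman--Teeple to drive iterates of $\mathcal{H}_B$ below $B^2$ and then sketching a downward induction (via Theorem~\ref{thm:ExistUniqueGamma}) to conclude that all earlier values $\gamma(\eta')$ would also lie below $2(B-1)^2$, contradicting the hypothesis on $\gamma(\eta+1)$; that passage is terse and leans on an external result. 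You instead prove a self-contained stability fact --- for $B\geq 3$, $\mathcal{H}_B$ maps $\{1,\ldots,2(B-1)^2\}$ into itself, via the observation that such $y$ has at most three digits with leading digit $1$ and not both lower digits equal to $B-1$, giving $\mathcal{H}_B(y)\leq 1+(B-1)^2+(B-2)^2\leq 2(B-1)^2$ --- and then take its contrapositive to push the threshold upward one height at a time, carrying the threshold bound as part of the induction invariant; the digit-count analogue handles $B=2$. Your version buys two things: it avoids the citation and the murky downward induction of the paper, and your explicit appeal to integrality at $B=3$ (where $\gamma(\eta+1)>8$ forces $\gamma(\eta+1)\geq 9$) quietly repairs the paper's base-case claim that $2(B-1)^2\geq B^2$ for all $B\geq 3$, which in fact fails at $B=3$. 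The paper's route, for its part, requires no new digit analysis beyond what is already cited.
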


\begin{proof}
Let's proceed by induction on $k$. For the base case, we have to show that $\gamma(\eta) < \gamma(\eta + 1)$. Since $\gamma(\eta + 1) > 2(B-1)^2 \geq B^2$ for all $B \geq 3$ and $\gamma(\eta + 1) \geq B^2$ if $B = 2$, we apply the lemma \ref{lem:GammaUpperBound} and then get the result. Suppose that $\gamma(\eta) < \gamma(\eta + 1) < \ldots < \gamma(\eta + k)$ holds for a certain $k$. We have to show that it also holds for $k + 1$. 

Suppose that $\gamma(\eta + k + 1) \leq 2(B-1)^2$ for all $B \geq 3$ and $\gamma(\eta + k + 1) < B^2$ if $B = 2$. In virtue of the corollary 7 \cite{Grundman01}, there is $n \in \mathbb{N}$ such that $\mathcal{H}^n_B(\gamma(\eta + k + 1)) < B^2$. It follows that $\mathcal{H}^{n+1}_B(\gamma(\eta + k + 1)) \leq 2(B-1)^2$ which is maximal if any $\gamma(\eta^{\prime}) = (B-1, B-1)_B$ for all $n < \eta + k + 1$ and $\eta^{\prime} \leq \eta + k + 1$. By induction on $n$ with the theorem \ref{thm:ExistUniqueGamma}, one shows that $\gamma(\eta^{\prime}) \leq 2(B-1)^2$ for all $\eta^{\prime} \leq \eta + k + 1$. Therefore, it is generally false to say that $\gamma(0) < \gamma(1) < \ldots < \gamma(\eta^{\prime})$.

Suppose instead that $\gamma(\eta + k + 1) > 2(B-1)^2$ for all $B \geq 3$ and $\gamma(\eta + k + 1) \geq B^2$ if $B = 2$. By the lemma \ref{lem:GammaUpperBound}, we have $\gamma(\eta + k + 1) > \gamma(\eta + k)$. Using the induction hypothesis, we get $\gamma(\eta) < \gamma(\eta + 1) < \ldots < \gamma(\eta + k) < \gamma(\eta + k + 1)$. Therefore, $\gamma(\eta) < \gamma(\eta + 1) < \ldots < \gamma(\eta + k)$ for all $k \in \mathbb{N}$.
\end{proof}

Let $[x]$ be the integer part function of $x$ and $\lceil x \rceil$ be the ceiling function of $x$. The operator $*$ in the expression $a * b$ denotes the concatenation of $a$ and $b$.

Let $L_{\eta} = \alpha_{\eta} + t_{\eta}$ denotes the total number of digits of $\gamma(\eta)$, where $\alpha_{\eta}$ is the number of digits lower than $B-1$ and $t_{\eta}$ the number of $B-1$. We also note $\gamma(\eta) = A_{\eta} * T_{\eta}$ where $A_{\eta}$ is the integer containing the digits lower than $B-1$ and $T_{\eta}$ the digits $B-1$. The next results give boundaries on $\alpha_{\eta}$ and $t_{\eta}$.

\begin{corollary} \label{crl:HeightLowerTBound}
We have $t_{\eta + 1} \geq \left[\frac{\gamma(\eta)}{(B-1)^2}\right] - (2B - 4)$ for all $B \geq 2$.
\end{corollary}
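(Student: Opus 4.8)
The plan is to play two facts against each other: the inequality $\mathcal{H}_B(\gamma(\eta+1)) \geq \gamma(\eta)$ from Theorem~\ref{thm:ExistUniqueGamma}, and an upper bound of the shape $L_{\eta+1} \leq \left[\gamma(\eta)/(B-1)^2\right] + 4$ on the number of digits of $\gamma(\eta+1)$. Expanding $\mathcal{H}_B(\gamma(\eta+1))$ over the digits of $\gamma(\eta+1)$ and feeding in the length bound turns these into a single linear inequality in $t_{\eta+1}$, which solves to the claim. For $\eta=0$ the assertion is checked at once since $\gamma(1)=(1,0)_B$ as observed above, so we assume $\eta\geq 1$; then $\gamma(\eta)\neq 1$, i.e.\ $\gamma(\eta)\geq 2$.

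\emph{Step 1 (length bound).} Set $q=\left[\gamma(\eta)/(B-1)^2\right]$ and write $\gamma(\eta)=q(B-1)^2+r$ with $0\leq r<(B-1)^2$. By Lagrange's four-square theorem $r=a^2+b^2+c^2+d^2$ for some $a,b,c,d\in\mathbb{N}$, and $r<(B-1)^2$ forces each of $a,b,c,d$ to be at most $B-2$, hence a legal base-$B$ digit. Let $w$ be a positive integer whose multiset of digits is $q$ copies of $B-1$ together with $a,b,c,d$ (some digit is nonzero since $\gamma(\eta)\geq 1$). Then $\mathcal{H}_B(w)=q(B-1)^2+r=\gamma(\eta)$, so $w$ is happy, and $w\neq 1$ because $\mathcal{H}_B(w)=\gamma(\eta)\geq 2$; since $\gamma(\eta)$ has height $\eta$ this gives $\eta_B(w)=\eta+1$, and Definition~\ref{dfn:HappyGamma} yields $\gamma(\eta+1)\leq w$. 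As $w$ has at most $q+4$ digits, $L_{\eta+1}\leq q+4$.

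\emph{Step 2 (solve for $t_{\eta+1}$).} The digits of $\gamma(\eta+1)$ are the $t_{\eta+1}$ copies of $B-1$ together with $\alpha_{\eta+1}=L_{\eta+1}-t_{\eta+1}$ digits each at most $B-2$, so Theorem~\ref{thm:ExistUniqueGamma} gives
\[
\gamma(\eta)\leq \mathcal{H}_B(\gamma(\eta+1))\leq t_{\eta+1}(B-1)^2+\alpha_{\eta+1}(B-2)^2 .
\]
Substituting $\alpha_{\eta+1}\leq q+4-t_{\eta+1}$ from Step~1 and $\gamma(\eta)\geq q(B-1)^2$, and dividing by $(B-1)^2-(B-2)^2=2B-3>0$, this rearranges to
\[
t_{\eta+1}\geq q-\frac{4(B-2)^2}{2B-3} .
\]
One checks that $4(B-2)^2\leq (2B-4)(2B-3)$ for all $B\geq 2$, hence $\tfrac{4(B-2)^2}{2B-3}\leq 2B-4$; since $t_{\eta+1}$ and $2B-4$ are integers, $t_{\eta+1}\geq q-(2B-4)=\left[\gamma(\eta)/(B-1)^2\right]-(2B-4)$, as desired.

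The load-bearing step is Step~1: producing a happy number of height $\eta+1$ whose length is only about $\gamma(\eta)/(B-1)^2$, which is exactly where Lagrange's four-square theorem is invoked, and where the degenerate cases ($\eta=0$, $q=0$, and $B=2$ with $(B-2)^2=0$) must be confirmed to fit. Everything after that is purely arithmetic, and the constant $2B-4$ is dictated by the ``$+4$'' of the four-square theorem together with the gap $2B-3=(B-1)^2-(B-2)^2$.
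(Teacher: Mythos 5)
Your proof is correct and follows essentially the same route as the paper: the same linear inequality $\gamma(\eta)\leq t_{\eta+1}(B-1)^2+\alpha_{\eta+1}(B-2)^2$ combined with the length bound $L_{\eta+1}\leq\left[\gamma(\eta)/(B-1)^2\right]+4$, then solved for $t_{\eta+1}$ using $(B-1)^2-(B-2)^2=2B-3$. The only divergence is that you prove the length bound inline via the four-square construction of a height-$(\eta+1)$ witness (where the paper simply cites Lemma 2.1 of Mei--Read-McFarland), which makes the argument self-contained and, by keeping the remainder and working with $\gamma(\eta)\geq q(B-1)^2$ together with $\mathcal{H}_B(\gamma(\eta+1))\geq\gamma(\eta)$, is actually tidier than the paper's step writing $\gamma(\eta)=(B-1)^2 y_{\eta+1}$.
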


\begin{proof}
Let $y_{\eta + 1} = \left[\frac{\gamma(\eta)}{(B-1)^2}\right]$. By definition of $t_{\eta + 1}$, we know that 
\begin{equation} \label{eq:NumberOfDigits_LowerThan_BMinusOne}
	\alpha_{\eta + 1} = L_{\eta + 1} - t_{\eta + 1}.
\end{equation}
In virtue of the lemma 2.1 \cite{Mei16}, $y_{\eta + 1} \leq L_{\eta + 1} \leq y_{\eta + 1} + 4$. With the equation \eqref{eq:NumberOfDigits_LowerThan_BMinusOne}, it implies that $\alpha_{\eta + 1} \leq y_{\eta + 1} + 4 - t_{\eta + 1}$ and $y_{\eta + 1} \leq \alpha_{\eta + 1} + t_{\eta + 1}$. With $\gamma(\eta) = (B-1)^2 y_{\eta + 1} = A_{\eta} * T_{\eta}$, we deduce that
\begin{equation} \label{ineq:BoundOnGamma_UsingDigits}
(B-1)^2y_{\eta + 1} \leq (B-2)^2(y_{\eta + 1} + 4 - t_{\eta + 1}) + (B-1)^2t_{\eta + 1}.
\end{equation}
The inequation \eqref{ineq:BoundOnGamma_UsingDigits} is equivalent to
\begin{equation*} 
	(2B - 3)y_{\eta + 1} \leq (4B^2 - 16B + 16) + (2B - 3)t_{\eta + 1}
\end{equation*}
which gives $t_{\eta + 1} \geq y_{\eta + 1} - \frac{4B^2 - 16B + 16}{2B - 3} = y_{\eta + 1} - 2B + 5 - \frac{1}{2B - 3}$. Since $B \geq 2$, we have $0 < \frac{1}{2B - 3} \leq 1$. Therefore, 
\begin{equation*}
t_{\eta + 1} \geq y_{\eta + 1} - (2B - 4).
\end{equation*}
\end{proof}

Since we found the lower bound for $t_{\eta + 1}$, we can deduce an upper bound on $\alpha_{\eta + 1}$.

\begin{corollary} \label{crl:HeightLowerAlphaBound}
We have $\alpha_{\eta + 1} \leq 2B$ for all $B \geq 2$.
\end{corollary}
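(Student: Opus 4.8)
The plan is to combine the two estimates already established, namely the lower bound on $t_{\eta + 1}$ from Corollary \ref{crl:HeightLowerTBound} and the upper bound on $L_{\eta + 1}$ coming from Lemma 2.1 \cite{Mei16}, via the defining identity $\alpha_{\eta + 1} = L_{\eta + 1} - t_{\eta + 1}$ recorded in \eqref{eq:NumberOfDigits_LowerThan_BMinusOne}. Since $\alpha_{\eta+1}$ is written as a difference of these two quantities, an upper bound on $L_{\eta+1}$ together with a lower bound on $t_{\eta+1}$ immediately yields an upper bound on $\alpha_{\eta+1}$.

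Concretely, I would keep the notation $y_{\eta + 1} = \left[\frac{\gamma(\eta)}{(B-1)^2}\right]$ used in the proof of Corollary \ref{crl:HeightLowerTBound}. From that corollary we have $t_{\eta + 1} \geq y_{\eta + 1} - (2B - 4)$, hence $-t_{\eta + 1} \leq -y_{\eta + 1} + 2B - 4$. From Lemma 2.1 \cite{Mei16} we have $L_{\eta + 1} \leq y_{\eta + 1} + 4$. Substituting both bounds into $\alpha_{\eta + 1} = L_{\eta + 1} - t_{\eta + 1}$ gives $\alpha_{\eta + 1} \leq (y_{\eta + 1} + 4) + (-y_{\eta + 1} + 2B - 4) = 2B$, which is the claimed inequality.

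The only point worth checking is that the $y_{\eta + 1}$ terms cancel exactly, so that the final bound is the absolute constant $2B$ with no residual dependence on $\gamma(\eta)$; this cancellation is automatic from the shapes of the two input inequalities. Consequently I do not anticipate any genuine obstacle: the statement is essentially a bookkeeping corollary of Corollary \ref{crl:HeightLowerTBound} and the digit-length estimate of \cite{Mei16}.
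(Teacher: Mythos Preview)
Your proposal is correct and follows exactly the same approach as the paper: combine the identity $\alpha_{\eta+1}=L_{\eta+1}-t_{\eta+1}$ with the upper bound $L_{\eta+1}\le y_{\eta+1}+4$ from Lemma~2.1 of \cite{Mei16} and the lower bound $t_{\eta+1}\ge y_{\eta+1}-(2B-4)$ from Corollary~\ref{crl:HeightLowerTBound}, so that the $y_{\eta+1}$ terms cancel and leave $2B$. The paper's proof is the same computation written in one displayed line.
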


\begin{proof}
By definition of $\alpha_{\eta + 1}$, we have $\alpha_{\eta + 1} = L_{\eta + 1} - t_{\eta + 1}$. Applying the lemma 2.1 \cite{Mei16} and the corollary \ref{crl:HeightLowerTBound}, we obtain 
\begin{equation*}
\alpha_{\eta + 1} \leq \left[\frac{\gamma(\eta)}{(B-1)^2}\right] + 4 - \left(\left[\frac{\gamma(\eta)}{(B-1)^2}\right] - (2B - 4)\right) = 2B.
\end{equation*}
\end{proof}

Now that we have found boundaries on $\gamma(\eta)$ and its number of digits, we use them in order to demonstrate a recursive relation between $\gamma(\eta + 1)$ and $\gamma(\eta)$ starting with an initial height $\eta = \eta^*$.

\begin{lemma} \label{lem:UpperBound_D}
We have $\mathcal{H}_B(\gamma(\eta + 1)) \leq \gamma(\eta) + 4(B-1)^2$.
\end{lemma}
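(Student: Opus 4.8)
The plan is to bound $\mathcal{H}_B(\gamma(\eta+1))$ directly by splitting $\gamma(\eta+1)$ into its two parts $A_{\eta+1}$ and $T_{\eta+1}$, using the decomposition $\gamma(\eta+1) = A_{\eta+1} * T_{\eta+1}$ introduced just before the statement. Write $\mathcal{H}_B(\gamma(\eta+1)) = S_A + S_T$, where $S_A$ is the sum of squares of the digits strictly below $B-1$ and $S_T = t_{\eta+1}(B-1)^2$ is the contribution of the $B-1$ digits. The goal is then to show $S_A + S_T \le \gamma(\eta) + 4(B-1)^2$.

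**Handling the two contributions.**

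For the $T$-part, I would relate $t_{\eta+1}$ back to $\gamma(\eta)$: from Corollary~\ref{crl:HeightLowerTBound} we have a lower bound on $t_{\eta+1}$, but here I need an upper estimate, which should come from Lemma~2.1 of \cite{Mei16}, namely $L_{\eta+1} \le \left[\frac{\gamma(\eta)}{(B-1)^2}\right] + 4$, together with $t_{\eta+1} \le L_{\eta+1}$. This gives $S_T = t_{\eta+1}(B-1)^2 \le \left[\frac{\gamma(\eta)}{(B-1)^2}\right](B-1)^2 + 4(B-1)^2 \le \gamma(\eta) + 4(B-1)^2$. For the $A$-part, I would invoke Corollary~\ref{crl:HeightLowerAlphaBound}: there are at most $2B$ digits below $B-1$, each at most $B-2$, so naively $S_A \le 2B(B-2)^2$. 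The subtlety is that this crude bound for $S_A$ on its own is too large, so the two estimates must be combined more carefully rather than added blindly.

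**The main obstacle and how to resolve it.**

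The hard part is that the $S_A$ and $S_T$ bounds overlap: the $4(B-1)^2$ slack in the $L_{\eta+1}$ estimate is being "spent" on the $B-1$ digits, but the $\alpha_{\eta+1}$ low-order digits also have to fit inside $\gamma(\eta)$'s magnitude. The cleaner route is to argue at the level of $\gamma(\eta+1)$ itself: since $\gamma(\eta+1)$ has $L_{\eta+1} \le y_{\eta+1}+4$ digits with $y_{\eta+1} = \left[\frac{\gamma(\eta)}{(B-1)^2}\right]$, and each digit is at most $B-1$, replacing every digit by its square at most multiplies the per-digit contribution — more precisely, for a digit $d$ we have $d^2 \le (B-1)d$ when... actually $d^2 \le (B-1)^2$ always, so $\mathcal{H}_B(\gamma(\eta+1)) \le L_{\eta+1}(B-1)^2 \le (y_{\eta+1}+4)(B-1)^2 = y_{\eta+1}(B-1)^2 + 4(B-1)^2 \le \gamma(\eta) + 4(B-1)^2$, using $y_{\eta+1}(B-1)^2 = \left[\frac{\gamma(\eta)}{(B-1)^2}\right](B-1)^2 \le \gamma(\eta)$. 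So the key step is simply the digit count bound from \cite{Mei16} combined with the trivial per-digit square bound; I expect the only real care needed is confirming the integer-part inequality $\left[\frac{\gamma(\eta)}{(B-1)^2}\right](B-1)^2 \le \gamma(\eta)$ and checking the base case $B=2$ separately, where $(B-1)^2 = 1$ and the estimate degenerates gracefully.
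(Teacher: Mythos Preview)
Your final argument is correct and is essentially the paper's own proof: bound $\mathcal{H}_B(\gamma(\eta+1)) \le L_{\eta+1}(B-1)^2$, invoke Lemma~2.1 of \cite{Mei16} to get $L_{\eta+1}\le \left[\frac{\gamma(\eta)}{(B-1)^2}\right]+4$, and then use $\left[\frac{\gamma(\eta)}{(B-1)^2}\right](B-1)^2 \le \gamma(\eta)$. The paper dresses this up slightly by writing $\mathcal{H}_B(\gamma(\eta+1))=\gamma(\eta)+d$ and $\gamma(\eta)=(B-1)^2\left[\frac{\gamma(\eta)}{(B-1)^2}\right]+v$, but the content is identical; your exploratory $S_A/S_T$ split and the separate $B=2$ check are unnecessary, since the one-line estimate already works uniformly.
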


\begin{proof}
In virtue of the theorem \ref{thm:ExistUniqueGamma}, we have $\mathcal{H}_B(\gamma(\eta + 1)) \geq \gamma(\eta)$ if and only if there is $d \in \mathbb{N}$ such that $\mathcal{H}_B(\gamma(\eta + 1)) = \gamma(\eta) + d$.

By definition of the integer part function, there is a positive integer $v < (B - 1)^2$ such that
\begin{equation}\label{eq:GammaIntegerPartDef}
\gamma(\eta) = (B-1)^2 \left[\frac{\gamma(\eta)}{(B-1)^2}\right] + v.
\end{equation}
In virtue of the lemma 2.1 \cite{Mei16} and the equation \eqref{eq:GammaIntegerPartDef}, we have 
\begin{equation} \label{ineq:UpperBound_GammaIntegerPartDef}
	\mathcal{H}_B(\gamma(\eta + 1)) = \gamma(\eta) + d \leq (B-1)^2 \left(\left[\frac{\gamma(\eta)}{(B-1)^2}\right] + 4 \right) + v.
\end{equation}
Equivalently, with the equation \eqref{eq:GammaIntegerPartDef}, the inequation \eqref{ineq:UpperBound_GammaIntegerPartDef} can be rewritten as 
\begin{equation} \label{ineq:NotSimplifiedUpperBound_GammaIntegerPartDef}
	\gamma(\eta) + d \leq \gamma(\eta) + 4(B-1)^2.
\end{equation}
After simplification of the inequation \eqref{ineq:NotSimplifiedUpperBound_GammaIntegerPartDef} we get
\begin{equation*}
d \leq 4(B-1)^2.
\end{equation*}
\end{proof}

\begin{theorem} \label{thm:Recursive_Equation_With_Initial_Condition}
Let $\eta^*$ be the smallest height such that $\gamma(\eta^*) > 2(B-1)^3 + 5$. If $\gamma(\eta^*) > 2(B-1)^3 + 5$, then $\mathcal{H}_B(\gamma(\eta + 1)) = \gamma(\eta)$ for all $B \geq 2$ and $\eta \geq \eta^*$.
\end{theorem}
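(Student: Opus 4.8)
The plan is to show that the ``defect'' $d := \mathcal{H}_B(\gamma(\eta+1)) - \gamma(\eta)$, which is a nonnegative integer by Theorem~\ref{thm:ExistUniqueGamma} and satisfies $d \leq 4(B-1)^2$ by Lemma~\ref{lem:UpperBound_D}, must in fact be $0$ once $\gamma(\eta^*) > 2(B-1)^3 + 5$. The idea is that if $d > 0$, then $\mathcal{H}_B(\gamma(\eta+1))$ is a happy number of height $\eta$ that is strictly larger than $\gamma(\eta)$; I would like to derive a contradiction with the minimality of $\gamma(\eta)$ by exhibiting a \emph{smaller} happy number of height $\eta$ than $\mathcal{H}_B(\gamma(\eta+1))$, and ultimately one no larger than $\gamma(\eta)$ whose $\mathcal{H}_B$-value is still $\geq \gamma(\eta-1)$ or which witnesses that $\gamma(\eta+1)$ was not minimal.

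First I would invoke Lemma~\ref{lem:Gamma_Increasing_With_Height_Increasing}: since $\gamma(\eta^*) > 2(B-1)^3 + 5 \geq 2(B-1)^2$ (for $B \geq 3$; the $B=2$ case is handled separately with the $\geq B^2$ threshold), the sequence $\gamma(\eta^*) < \gamma(\eta^*+1) < \cdots$ is strictly increasing, so for every $\eta \geq \eta^*$ we have $\gamma(\eta) > 2(B-1)^3 + 5$ as well. Next I would use the digit-structure decomposition $\gamma(\eta+1) = A_{\eta+1} * T_{\eta+1}$ together with Corollaries~\ref{crl:HeightLowerTBound} and~\ref{crl:HeightLowerAlphaBound}: the bound $\alpha_{\eta+1} \leq 2B$ controls the ``small-digit'' part, while $t_{\eta+1} \geq \left[\frac{\gamma(\eta)}{(B-1)^2}\right] - (2B-4)$ forces $\gamma(\eta+1)$ to consist overwhelmingly of digits equal to $B-1$. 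The key computation is then to show that, under $\gamma(\eta) > 2(B-1)^3 + 5$, one can modify $\gamma(\eta+1)$ — typically by decrementing one digit of $A_{\eta+1}$ or by deleting a trailing $B-1$ and compensating — to produce a strictly smaller integer whose $\mathcal{H}_B$-value is still at least $\gamma(\eta)$, contradicting that $\gamma(\eta+1)$ is the smallest such; this is where the threshold $2(B-1)^3 + 5$ must be exactly what is needed for the arithmetic to close.

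More concretely: suppose $d \geq 1$. Because $t_{\eta+1}$ is large, $\gamma(\eta+1)$ has at least one digit equal to $B-1$ in a position where lowering it to $B-2$ decreases $\mathcal{H}_B$ by exactly $(B-1)^2 - (B-2)^2 = 2B-3$. Since $d \leq 4(B-1)^2$, after at most $\lceil d/(2B-3)\rceil \leq 2B+1$ such decrements (which is $\leq t_{\eta+1}$ by the corollary, using $\gamma(\eta) > 2(B-1)^3+5$ to guarantee $\left[\gamma(\eta)/(B-1)^2\right] - (2B-4) \geq 2B+1$) we obtain an integer $\gamma'$ with $\gamma' < \gamma(\eta+1)$ and $\gamma(\eta) \leq \mathcal{H}_B(\gamma') \leq \mathcal{H}_B(\gamma(\eta+1))$; in particular $\gamma'$ is happy, and $\mathcal{H}_B(\gamma') $ has the same height $\eta$ iff $\mathcal{H}_B(\gamma')$ is itself of height $\eta$, which I would arrange by choosing the decrements so that $\mathcal{H}_B(\gamma')$ lands on a value whose height is $\eta$ — most cleanly by pushing $\mathcal{H}_B(\gamma')$ all the way down to $\gamma(\eta)$ exactly when $d$ is a multiple of $2B-3$, and otherwise using the extra slack digits from $\alpha_{\eta+1}$ to absorb the remainder $d \bmod (2B-3)$. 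Either way $\gamma'$ is a happy number of height $\eta+1$ strictly smaller than $\gamma(\eta+1)$, contradicting Definition~\ref{dfn:HappyGamma}. Hence $d = 0$, i.e. $\mathcal{H}_B(\gamma(\eta+1)) = \gamma(\eta)$.

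The main obstacle I anticipate is the bookkeeping in the last step: one must simultaneously (i) keep the modified number strictly below $\gamma(\eta+1)$, (ii) keep its $\mathcal{H}_B$-value at a height-$\eta$ value (ideally exactly $\gamma(\eta)$), and (iii) verify there are enough digits of each type ($B-1$ digits and sub-$(B-1)$ digits) to realize the needed adjustment — and it is precisely reconciling the worst case $d = 4(B-1)^2$ with the digit counts that should pin down the constant $2(B-1)^3 + 5$. For $B = 2$ the digit $B-1 = 1$ admits only the decrement to $0$ (changing $\mathcal{H}_B$ by $1$), so the remainder issue disappears and the weaker hypothesis $\gamma(\eta+1) \geq B^2 = 4$ suffices, consistent with the statement.
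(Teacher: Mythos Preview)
Your overall plan --- assume $d := \mathcal{H}_B(\gamma(\eta+1)) - \gamma(\eta) > 0$ and derive a contradiction with the minimality of $\gamma(\eta+1)$ --- is reasonable, but the execution has a genuine gap at exactly the point you flagged as ``the main obstacle.'' To contradict minimality you must produce some $\gamma' < \gamma(\eta+1)$ of height $\eta+1$, which forces $\mathcal{H}_B(\gamma')$ to be a number of height \emph{exactly} $\eta$. The only such number you actually know is $\gamma(\eta)$ itself, so you must hit $\mathcal{H}_B(\gamma') = \gamma(\eta)$ on the nose, i.e.\ reduce the digit-square sum by \emph{exactly} $d$. But lowering a trailing $(B-1)$ to some $c \in \{0,\dots,B-2\}$ changes $\mathcal{H}_B$ by an element of $\{(B-1)^2 - c^2\}$, whose minimum positive value is $2B-3$; sums of such quantities can never equal any $d$ with $0 < d < 2B-3$. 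Your fallback, ``use the slack digits of $A_{\eta+1}$,'' does not rescue this: you have no control over which digits appear in $A_{\eta+1}$, decrementing a digit $a$ there changes $\mathcal{H}_B$ by $2a-1$ (again bounded below), and \emph{incrementing} a digit of $A_{\eta+1}$ in a high position may push $\gamma'$ above $\gamma(\eta+1)$. So the remainder cannot in general be absorbed, and the argument breaks. A secondary symptom is that your digit-count estimate ($t_{\eta+1} \gtrsim 2B$ needed) leads to a threshold of order $4(B-1)^3$, not the stated $2(B-1)^3+5$; the constant does not fall out of your scheme.

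The paper takes a different route that sidesteps the need to hit any target exactly. Writing $\mathcal{H}_B(\gamma(\eta+2)) = \gamma(\eta+1) + d$ with $d>0$, it exploits the \emph{carry} produced when one adds a small positive $d$ to a number ending in a long block of $(B-1)$'s: the block collapses to zeros, so
\[
\gamma(\eta+1)+d \;=\; (A_{\eta+1}+1)\, *\, 0 * \cdots * 0 \, * \, (d-1),
\]
and hence $\mathcal{H}_B(\gamma(\eta+1)+d) = \mathcal{H}_B(A_{\eta+1}+1) + \mathcal{H}_B(d-1)$. Bounding each piece via $\alpha_{\eta+1}\le 2B$ (Corollary~\ref{crl:HeightLowerAlphaBound}) and $d\le 4(B-1)^2$ (Lemma~\ref{lem:UpperBound_D}) gives $\mathcal{H}_B(\gamma(\eta+1)+d) \le 2(B-1)^3+5$. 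Since $\mathcal{H}_B(\gamma(\eta+1)+d)=\mathcal{H}_B^2(\gamma(\eta+2))$ is a happy number of height $\eta$ and therefore $\ge \gamma(\eta) > 2(B-1)^3+5$, this is a contradiction. The advantage over your approach is that nothing needs to be tuned: the carry automatically annihilates the large $(B-1)$-block regardless of the exact value of $d$, and the threshold $2(B-1)^3+5$ emerges directly from bounding $\mathcal{H}_B(A_{\eta+1}+1)+\mathcal{H}_B(d-1)$.
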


\begin{proof}
In virtue of the theorem \ref{thm:ExistUniqueGamma}, there is $d \in \mathbb{N}$ such that $\mathcal{H}_B(\gamma(\eta + 2)) \geq \gamma(\eta + 1) + d$. The lemma \ref{lem:UpperBound_D} implies that $d \leq 4(B-1)^2$. Let's proceed by induction on $\eta$ and suppose that $\eta = \eta^*$ and $d > 0$. We want to show that $d > 0$ is impossible in order to show that $d = 0$.

We know that $\gamma(\eta + 1) = A_{\eta + 1} * T_{\eta + 1}$. If $t_{\eta + 1} = 0$, we have $\gamma(\eta + 1) + d = A_{\eta + 1} + d$. This gives either $\gamma(\eta + 1) + d = A^{\prime}_{\eta + 1}$, where $A^{\prime}_{\eta + 1}$ contains only digits between $1$ and $B-2$, or $\gamma(\eta + 1) + d = A^{\prime}_{\eta + 1} * T^{\prime}_{\eta + 1}$ where $T^{\prime}$ contains only the digits $B-1$. The second case will be taken as if $t_{\eta + 1} > 0$. If $\gamma(\eta + 1) + d = A^{\prime}_{\eta + 1}$, by the corollary \ref{crl:HeightLowerAlphaBound}, we have that 
\begin{equation}
	\mathcal{H}_B(\gamma(\eta + 1) + d) \leq 2B(B-2)^2.
\end{equation} 

Suppose now that $t_{\eta + 1} > 0$. This means that $\gamma(\eta + 1) = A_{\eta + 1} * (B-1) * \ldots * (B-1)$. Since $d > 0$, we have that $\gamma(\eta + 1) + d = (A_{\eta + 1} + 1) * 0 * \ldots * 0 * (d - 1)$. Thus, we have that
\begin{equation} \label{eq:H_Of_Gamma_Eta_Plus_D}
	\mathcal{H}_B(\gamma(\eta + 1) + d) = \mathcal{H}_B(A_{\eta + 1} + 1) + \mathcal{H}_B(d - 1).
\end{equation}
It follows from the equation \eqref{eq:H_Of_Gamma_Eta_Plus_D} and the corollary \ref{crl:HeightLowerAlphaBound} that
\begin{equation} \label{ineq:H_Of_A_Eta_Plus_One_UpperBound}
	\mathcal{H}_B(A_{\eta + 1} + 1) \leq (2B - 1)(B-2)^2 + (B-1)^2. 
\end{equation}
We know that $d-1 \leq 4(B-1)^2 - 1 = 4B^2 - 8B + 3 = (3, B-8, 3)_{B \geq 8}$. We deduce that
\begin{equation} \label{ineq:H_Of_D_Minus_One_UpperBound}
	\mathcal{H}_B(d-1) \leq 2(B-1)^2 + 4
\end{equation}
because to have $\mathcal{H}_B(d-1)$ maximal, we have to have $d - 1 = (2, B-1, B-1)_B$. Combining the inequations \eqref{ineq:H_Of_A_Eta_Plus_One_UpperBound} and \eqref{ineq:H_Of_D_Minus_One_UpperBound} in the equation \eqref{eq:H_Of_Gamma_Eta_Plus_D} gives
\begin{equation} \label{ineq:H_Of_Gamma_Eta_Plus_D_UpperBound}
	\begin{aligned}
		\mathcal{H}_B(\gamma(\eta + 1) + d) &\leq (2B-1)(B-2)^2 + (B-1)^2 + 2(B-1)^2 + 4 \\
		&= 2(B-1)^3 + 5.
	\end{aligned}
\end{equation}
However, $\gamma(\eta) > 2(B-1)^3 + 5$ which contradict the theorem \ref{thm:ExistUniqueGamma} because $\mathcal{H}^2_B(\gamma(\eta + 2)) = \mathcal{H}_B(\gamma(\eta + 1) + d) < \gamma(\eta)$. Therefore, we have to have $d = 0$.

Suppose now that $\mathcal{H}_B(\gamma(\eta + 1)) = \gamma(\eta)$ for a certain $\eta > \eta^*$. We need to show that it holds for $\eta + 1$. In virtue of the theorem \ref{thm:ExistUniqueGamma}, there is $d \in \mathbb{N}$ such that $\mathcal{H}_B(\gamma(\eta + 2)) = \gamma(\eta + 1) + d$. Suppose that $d > 0$. Applying $\mathcal{H}_B$ on both sides gives
\begin{equation}
	\mathcal{H}^2_B(\gamma(\eta + 2)) = \mathcal{H}_B(\gamma(\eta + 1) + d).
\end{equation}
Since $\gamma(\eta^*) > 2(B-1)^3 + 5 > \max(2(B-1)^2, B^2)$ for all $B \geq 2$, the lemma \ref{lem:Gamma_Increasing_With_Height_Increasing} implies that $\gamma(\eta) > \gamma(\eta^*)$. Therefore, we get the same contradiction as in the method used in the induction base case. Thus, $d = 0$ and then, using the induction hypothesis, we obtain $\mathcal{H}^2_B(\gamma(\eta + 2)) = \mathcal{H}_B(\gamma(\eta + 1)) = \gamma(\eta)$.

Therefore, $\mathcal{H}_B(\gamma(\eta + 1)) = \gamma(\eta)$ for all $B \geq 2$ and $\eta \geq \eta^*$ if $\gamma(\eta^*) > 2(B-1)^3 + 5$.
\end{proof}

With the result given by the theorem \ref{thm:Recursive_Equation_With_Initial_Condition}, we deduce the recursive relation between $\gamma(\eta + 1)$ and $\gamma(\eta)$ with the following corollary.

\begin{corollary} \label{crl:GammaCanonicalForm}
If $\eta \geq \eta^*$, then
\begin{equation} \label{eq:General_Gamma_Height}
	\gamma(\eta + 1) = (A_{\eta + 1}+1) B^{\frac{\gamma(\eta) - \mathcal{H}_B(A_{\eta + 1})}{(B-1)^2}} - 1 
\end{equation}
\end{corollary}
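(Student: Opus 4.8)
The plan is to read the formula off directly from Theorem~\ref{thm:Recursive_Equation_With_Initial_Condition} together with the digit decomposition $\gamma(\eta+1) = A_{\eta+1} * T_{\eta+1}$ that was fixed just before Corollary~\ref{crl:HeightLowerTBound}. Since $\eta \geq \eta^*$, the theorem gives $\mathcal{H}_B(\gamma(\eta+1)) = \gamma(\eta)$. Because $\mathcal{H}_B$ merely sums the squares of the digits, it is additive under concatenation, so $\mathcal{H}_B(\gamma(\eta+1)) = \mathcal{H}_B(A_{\eta+1}) + \mathcal{H}_B(T_{\eta+1})$; and as $T_{\eta+1}$ consists of exactly $t_{\eta+1}$ copies of the digit $B-1$, we have $\mathcal{H}_B(T_{\eta+1}) = t_{\eta+1}(B-1)^2$. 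Equating with $\gamma(\eta)$ and solving yields $t_{\eta+1} = \frac{\gamma(\eta) - \mathcal{H}_B(A_{\eta+1})}{(B-1)^2}$; note the right-hand side is automatically a non-negative integer, since it equals the count of trailing $(B-1)$-digits of $\gamma(\eta+1)$.

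Next I would convert the concatenation $\gamma(\eta+1) = A_{\eta+1} * \underbrace{(B-1)*\cdots*(B-1)}_{t_{\eta+1}}$ into closed form. Writing $a$ for the integer value of $A_{\eta+1}$, shifting it left by $t_{\eta+1}$ places and appending the all-$(B-1)$ tail gives $\gamma(\eta+1) = aB^{t_{\eta+1}} + (B-1)\sum_{i=0}^{t_{\eta+1}-1} B^i = aB^{t_{\eta+1}} + (B^{t_{\eta+1}} - 1) = (a+1)B^{t_{\eta+1}} - 1$. Substituting the expression for $t_{\eta+1}$ obtained above produces exactly \eqref{eq:General_Gamma_Height}.

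Finally I would dispatch the degenerate cases so that the formula holds literally as written: if $t_{\eta+1} = 0$ then $\gamma(\eta+1) = A_{\eta+1}$, $\mathcal{H}_B(A_{\eta+1}) = \gamma(\eta)$, the exponent vanishes, and both sides agree; and if $A_{\eta+1}$ is empty (i.e.\ $\gamma(\eta+1)$ is a string of $(B-1)$'s) we take its value to be $0$, use $\mathcal{H}_B(0) = 0$, and the formula reduces to $B^{\gamma(\eta)/(B-1)^2} - 1$, as expected. There is essentially no obstacle here; the only point requiring care is that $A_{\eta+1}$ is, by the convention fixed before Corollary~\ref{crl:HeightLowerTBound}, the maximal leading block of $\gamma(\eta+1)$ whose last digit is at most $B-2$, which is precisely what makes $t_{\eta+1}$ — and hence the exponent in \eqref{eq:General_Gamma_Height} — well defined.
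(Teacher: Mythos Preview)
Your proof is correct and follows essentially the same route as the paper: use Theorem~\ref{thm:Recursive_Equation_With_Initial_Condition} to get $\mathcal{H}_B(\gamma(\eta+1)) = \gamma(\eta)$, split $\mathcal{H}_B$ across the concatenation $A_{\eta+1} * T_{\eta+1}$ to solve for $t_{\eta+1}$, and substitute into the closed form $(A_{\eta+1}+1)B^{t_{\eta+1}} - 1$. Your explicit geometric-sum computation and your treatment of the degenerate cases $t_{\eta+1}=0$ and $A_{\eta+1}=0$ are welcome extra care but do not constitute a different argument.
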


\begin{proof}
By definition of $\gamma$, we can write
\begin{equation} \label{eq:GammaEtaPlusRPlusOne}
\gamma(\eta + 1) = A_{\eta + 1} * T_{\eta + 1} = (A_{\eta + 1}+1) B^{t_{\eta + 1}} - 1.
\end{equation}
Applying $\mathcal{H}_B$ to the equation \eqref{eq:GammaEtaPlusRPlusOne} gives
\begin{equation}
\mathcal{H}_B(\gamma(\eta + 1)) = \mathcal{H}_B(A_{\eta + 1}) + (B-1)^2 t_{\eta + 1}.
\end{equation}
Since $\eta \geq \eta^*$, we apply the theorem \ref{thm:Recursive_Equation_With_Initial_Condition} and get 
\begin{equation}
\gamma(\eta) = \mathcal{H}_B(A_{\eta + 1}) + (B-1)^2 t_{\eta + 1}
\end{equation}
if and only if
\begin{equation} \label{eq:NumberOfBMinusOne}
t_{\eta + 1} = \frac{\gamma(\eta) - \mathcal{H}_B(A_{\eta + 1})}{(B-1)^2}.
\end{equation}
Substituting the equation \eqref{eq:NumberOfBMinusOne} in \eqref{eq:GammaEtaPlusRPlusOne} gives the result. 
\end{proof}

We conclude this section on the following questions: 
\begin{enumerate}
	\item How can the initial height $\eta^*$ be expressed in function of the base $B$?
	\item Can we get a sharper lower bound on $\gamma(\eta^*)$?
\end{enumerate}

\section{Algorithm evaluating $\gamma(\eta)$ with $\eta$ unknown} \label{sec:HappyAlgorithm}
In this section, the objective is to use the corollary \ref{crl:GammaCanonicalForm} in order to build two algorithms. The first one searches for the minimal $A_{\eta + 1}$ for an unknown height $\eta$ and the second one searches for $\gamma(\eta + 1)$ corresponding to $A_{\eta + 1}$ found. We will see that evaluating the remainder of $\frac{\gamma(\eta)}{(B-1)^2}$ is an important computational obstacle to consider. Also, we will explain why these algorithms cannot search for $\gamma(\eta)$ based on a given $\eta$.

For this section, we define a function $U: \mathbb{N} \setminus \{0,1\} \longrightarrow \mathbb{N}$ taking the base $B \geq 2$ as the input parameter. The function $U(B)$ denotes the upper bound of iterations done in the algorithm \ref{alg:GammaEta}. In virtue of lemma 2.1 \cite{Mei16}, $U(B)$ exists and is finite.

Let $E$ be the set of solutions of $\mathcal{H}_B(A_{\eta + 1})$. The objective of the algorithm \ref{alg:AMinimal} is to find $\hat{A}_{\eta + 1} = \min(E)$. 

\newpage

\RestyleAlgo{boxed}
\begin{algorithm}[H] \label{alg:AMinimal}
\SetAlgoLined
\KwData{$\mathcal{H}_B(A_{\eta + 1}) \in \mathbb{N}$ and $B \geq 3$}
\KwResult{$\hat{A}_{\eta + 1}$}
\BlankLine
\lIf{$\mathcal{H}_B(A_{\eta + 1}) = 0$}{\Return $0$}
$D \leftarrow \left[\frac{\mathcal{H}_B(A_{\eta + 1})}{(B-2)^2}\right] + 1$ \\
\If{$\mathcal{H}_B(A_{\eta + 1}) \equiv 0 \pmod{(B-2)^2}$}
{
	\Return $(B-2, \ldots, B-2)_B$ with $D - 1$ times the digit $B-2$
}
$A^* \leftarrow (1, \ldots, 1)_B$ with $D$ times the digit $1$ \\
$j \leftarrow D-1$ \\
$h \leftarrow \mathcal{H}_B(A^*)$ \\
\While{$h \neq \mathcal{H}_B(A_{\eta + 1})$}
{
	\If{$h > \mathcal{H}_B(A_{\eta + 1})$ \textbf{or} $A^*(j) = B - 2$}
	{
		\If{$j = 0$}
		{
			$D \leftarrow D + 1$ \\
			$A^* = (1, \ldots, 1)_B$ with $D$ times the digit $1$ \\
		}
		\Else
		{
			$j \leftarrow j - 1$ \\
			$A^*(j) \leftarrow A^*(j) + 1$ \\
			$A^*(k) \leftarrow A^*(j)$ for all $k = j+1, \ldots, D-1$ \\
		}
	}
	\Else
	{
		$j \leftarrow D-1$ \\
		$A^*(j) \leftarrow A^*(j) + 1$ \\
	}
	$h \leftarrow \mathcal{H}_B(A^*)$ \\
}
\Return{$A^*$}
\caption{Find $\hat{A}_{\eta + 1} = \min(E)$}
\end{algorithm}

\begin{theorem} \label{thm:AMinimalAlgorithm}
For any $B \geq 3$, the algorithm \ref{alg:AMinimal} terminates with any $\mathcal{H}_B(A_{\eta + 1}) \geq 0$ and returns $\hat{A}_{\eta + 1}$ for an unknown $\eta$.
\end{theorem}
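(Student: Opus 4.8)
The plan is to read the loop in algorithm~\ref{alg:AMinimal} as an enumeration procedure and to prove, separately, that it terminates and that the string it returns is $\min(E)$. Write $h=\mathcal{H}_B(A_{\eta+1})$ for the input value, and recall that $E$ consists of the positive integers whose base-$B$ digits all lie in $\{1,\ldots,B-2\}$ and whose $\mathcal{H}_B$-value equals $h$. First I would dispose of the two early returns. If $h=0$, the returned value $0$ is $\min(E)$ by convention. If $(B-2)^2 \mid h$, then the string $(B-2,\ldots,B-2)$ with $\frac{h}{(B-2)^2}$ digits lies in $E$; it is the unique member of $E$ of that length, since its $\mathcal{H}_B$-value is already the maximum attainable by a string of that many digits, while every shorter string has $\mathcal{H}_B$-value strictly below $h$; hence it is $\min(E)$. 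For the remainder assume $h>0$ and $(B-2)^2 \nmid h$, so that the while loop is entered.

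The heart of the argument is a loop invariant, which I would establish by induction over the iterations: for the current value of $D$, the string $A^*$ always satisfies $1 \le A^*(0) \le A^*(1) \le \cdots \le A^*(D-1) \le B-2$, and, so long as $D$ is unchanged, the successive strings $A^*$ strictly increase as integers and run through the non-decreasing length-$D$ strings over $\{1,\ldots,B-2\}$ in lexicographic order, exhausting all of them if the target is never met (the last being $(B-2,\ldots,B-2)$). Lexicographic order restricted to non-decreasing strings of a fixed length coincides with increasing numerical order, which is what ultimately matters. The proof is a case analysis on the three updates performed in the loop body: incrementing the rightmost digit and resetting $j$ to $D-1$; the carry step ($j \leftarrow j-1$, then $A^*(j) \leftarrow A^*(j)+1$ and $A^*(k)\leftarrow A^*(j)$ for $k>j$); and the reset to all ones together with $D\leftarrow D+1$. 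The only delicate point is that no update creates a digit equal to $B-1$: whenever $j<D-1$, the suffix $A^*(j),\ldots,A^*(D-1)$ is constant and was just produced by a carry, so if its common value is $B-2$ it was $B-3$ one step earlier, which forces $A^*(j-1)\le B-3$; and the rightmost-increment branch is taken only when the incremented digit is below $B-2$.

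Granting the invariant, termination follows quickly. For each fixed $D$ there are only finitely many non-decreasing length-$D$ strings over $\{1,\ldots,B-2\}$, so after finitely many iterations the loop either halts (having reached $\mathcal{H}_B(A^*)=h$) or reaches $A^*=(B-2,\ldots,B-2)$, at which point $j$ descends to $0$ and $D$ is incremented. Because the length-$h$ string of all ones has $\mathcal{H}_B$-value $h$, the moment $D$ reaches $h$ the very first string examined meets the target. Hence $D$ never exceeds $h$, only finitely many strings are examined in total, and the loop terminates.

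For correctness, start from the observation that for any $x\in E$, rearranging the digits of $x$ into non-decreasing order (and deleting any leading zero thus created) produces an element of $E$ no larger than $x$; so $\min(E)$ is attained by a non-decreasing string with no zero digit. Such a string of length $k$ has $\mathcal{H}_B$-value at most $k(B-2)^2$, so $k \ge h/(B-2)^2$, and since $(B-2)^2\nmid h$ this gives $k \ge \left[\frac{h}{(B-2)^2}\right]+1$, which is exactly the initial value of $D$. By the invariant, the loop enumerates the non-decreasing strings of that length in increasing order, then those of the next length, and so on; and since every length-$k$ integer is smaller than every length-$(k+1)$ integer, the concatenation of these lists is still in increasing order. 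Therefore the first string the loop encounters that belongs to $E$ is $\min(E)=\hat{A}_{\eta+1}$, which is the value returned. The main obstacle is the verification of the loop invariant --- in particular the bookkeeping on the index $j$ and the claim that the enumeration visits every admissible digit-string exactly once and in the correct order --- which is elementary but requires care at the boundary values $j=0$ and $j=D-1$.
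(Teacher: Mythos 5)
Your overall architecture (early returns, digit-range invariant, strictly increasing enumeration by length, lower bound on the initial $D$, upper bound $D \le \mathcal{H}_B(A_{\eta+1})$ via the all-ones string) parallels the paper's proof, and those parts are fine. But there is a genuine gap in the central invariant: you claim that, for fixed $D$, the loop runs through \emph{all} non-decreasing length-$D$ strings over $\{1,\ldots,B-2\}$ in increasing order, ``exhausting all of them if the target is never met,'' and your correctness conclusion (``the first string the loop encounters that belongs to $E$ is $\min(E)$'') rests on this completeness. That claim is false, because you have ignored the first half of the loop's test: a carry (or the increment of $D$) is triggered not only when $A^*(j)=B-2$ but also whenever $h>\mathcal{H}_B(A_{\eta+1})$, and in that case the algorithm deliberately skips non-decreasing strings. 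For example, with $B=10$ and target value $66$, at length $D=2$ the loop reaches $(5,7)_{10}$, finds $h=74>66$, and carries to $(6,6)_{10}$, never visiting $(5,8)_{10}$; then at $(6,6)_{10}$ it has $h=72>66$ with $j=0$ and increments $D$, never visiting $(6,7)_{10},(6,8)_{10},(7,7)_{10},(7,8)_{10},(8,8)_{10}$. So the enumeration is pruned, and without further argument nothing rules out that a member of $E$ smaller than the returned value was among the pruned strings.

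The gap is repairable, but it needs an explicit step you do not have: whenever a carry or the increment of $D$ is triggered by the overshoot condition $h>\mathcal{H}_B(A_{\eta+1})$, every non-decreasing string that gets skipped shares the current prefix and has each remaining digit at least as large as the corresponding digit of the overshooting string (this uses the constant-suffix shape created by carries), hence has $\mathcal{H}_B$-value strictly larger than the target and cannot belong to $E$; skipped strings that are not non-decreasing are excluded by the sorting/permutation argument you already use for $\min(E)$. Note that the paper does not claim exhaustive enumeration at all: it argues instead that the ``ignored'' values cannot be $\hat{A}_{\eta+1}$ because $\mathcal{H}_B$ is invariant under digit permutations and the equivalent rearranged values were already tested earlier in the ascending run (though the paper, too, is terse about the overshoot-triggered skips). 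As written, your proposal proves termination but not that the returned value is $\min(E)$.
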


\begin{proof}
If $\mathcal{H}_B(A_{\eta + 1}) = 0$, then $\hat{A}_{\eta + 1} = 0$ and the algorithm terminates and returns $0$. If $0 < \mathcal{H}_B(A_{\eta + 1}) \equiv 0 \pmod{(B-2)^2}$ and $B \geq 3$, then we get $\hat{A}_{\eta + 1} = (B-2, \ldots, B-2)_B$ with $\frac{\mathcal{H}_B(A_{\eta + 1})}{(B-2)^2}$ times the $(B-2)$-digit. This solution is trivially minimal because the other solutions, if they exist, have more than $\frac{\mathcal{H}_B(A_{\eta + 1})}{(B-2)^2}$ digits.

Suppose that $\mathcal{H}_B(A_{\eta + 1}) \not \equiv 0 \pmod{(B-2)^2}$ and $B \geq 3$. We have to show that the algorithm stops when the current candidate $A^*$ is the minimal solution $\hat{A}_{\eta + 1}$. Let $\bar{A} = (1, \ldots, 1)_B$ with $\mathcal{H}_B(A_{\eta + 1})$ times the digit 1. Trivially, $\bar{A} \geq \hat{A}_{\eta + 1}$ where $\bar{A} = \max(E)$.

By construction, the algorithm initializes $D = \left[\frac{\mathcal{H}_B(A_{\eta + 1})}{(B-2)^2}\right] + 1$ and $A^* = (1, \ldots, 1)_B$ with $D$ times the digit $1$ and runs through a subset $N \subset \mathbb{N}_*$ of numbers. Let $j = D - 1$ be an iterator where $A^* = (A^*(0), A^*(1), \ldots, A^*(j), \ldots, A^*(D-1))_B$ and $h = \mathcal{H}_B(A^*)$.

Let $i$ be a counter of iterations in the loop and $A^*_i$ be the value of $A^*$ at the $i^{th}$ iteration. For all $i$, the algorithm ensures that $A^*_{i+1} > A^*_i$ because in every case, it either increments a digit of $A^*_i$ or increments $D$ and fix $A^*_{i+1} = (1, \ldots, 1)_B$ with $D$ times the digit $1$.

Let $I = \{1, 2, \ldots, B-2\}$ and suppose that $A^*_{i-1}(r) \in I$ where $0 \leq r < D$ at each iteration $i$. We have to show that $A^*_i(r) \in I$. We can observe three general cases:
\begin{enumerate}
	\item If $A^*_i(j) < B-2$, then $A^*_i(j)$ increases by one implying that $A^*_i(j) \leq B-2 \in I$.
	\item If $j = 0$, either the algorithm falls into the case (i) or $A^*_i(r) = 1 \in I$ for all $r$.
	\item If $j > 0$, either the algorithm falls into the case (i) or $A^*_i(k) = A^*_i(j) + 1 \in I$ for all $j \leq k \leq D-1$ where $j = j - 1$ because by definition of $j$, we know that $A^*_i(j-1) \in I$.
\end{enumerate}
Therefore, $A^*_i(r) \in I$ for all $0 \leq r < D$ at each iteration $i$.

If $j > 0$ and $A^*_i(j) = B - 2$, then $A^*_i(k) = A^*_i(j)$ for all $k = j+1, \ldots, D-1$. The values when $1 \leq A^*_i(k) \leq A^*_i(j)$ are ignored because the addition is commutative in $\mathbb{N}_*$ and we run through $N$ in ascending order. Also, $\mathcal{H}_B$ is invariant to the permutations of the digits of $A^*_i$. Thus, the algorithm has already tested those values where $1 \leq A^*_i(k) \leq A^*_i(j) \leq B-2$ up to permutation. Therefore, if $A^*_i$ is an ignored value, then $A^*_i \neq \hat{A}_{\eta + 1}$. This also means that 
\begin{equation} \label{ineq:A_By_Digit}
	1 \leq A^*_i(D-1) \leq A^*_i(D-2) \leq \ldots \leq A^*_i(0) \leq B-2.
\end{equation}

We have to show that the algorithm ends with $A^* \in E$. We have shown that $A^*_i(r) \in I$ for all $r$ and that each time $D$ increments, $A^*$ is reset to $(1,\ldots,1)_B$ with $D$ times the digit one. We have also shown that $A_{i+1} > A_i$ for all $i$. Thus, the loop either stops when $h = \mathcal{H}_B(A_{\eta + 1})$ or reaches its upper bound when $A^*_i = \bar{A}$. Therefore, the algorithm ends with $A^* \in E$.

Let $K = \left\{\left[\frac{\mathcal{H}_B(A_{\eta + 1})}{(B-2)^2}\right] + 1, \left[\frac{\mathcal{H}_B(A_{\eta + 1})}{(B-2)^2}\right] + 2, \ldots, \mathcal{H}_B(A_{\eta + 1})\right\}$. We have to show that $D \in K$. If $D > \mathcal{H}_B(A_{\eta + 1})$, then $A^* > \bar{A}$ a contradiction with $A^* \in E$. If $D < \left[\frac{\mathcal{H}_B(A_{\eta + 1})}{(B-2)^2}\right] + 1$, then $\mathcal{H}_B(A^*_i) \leq (D - 1)(B-2)^2 < \mathcal{H}_B(A_{\eta + 1})$ which is also a contradiction with $A^* \in E$. Thus, $D \in K$. 

Since $A^*$ increases, $A^* \in E$ and the ignored values cannot be the minimal solution searched, the algorithm returns $\hat{A}_{\eta + 1}$ for any $\mathcal{H}_B(A_{\eta + 1}) \geq 0$ and $B \geq 3$.
\end{proof}

In order to find $\gamma(\eta + 1)$, we need to know $t_{\eta + 1}$. From the corollary \ref{crl:GammaCanonicalForm}, we deduce that there is $m \in \mathbb{N}$ such that $\gamma(\eta) - \mathcal{H}_B(A_{\eta + 1}) = m(B - 1)^2$. Equivalently,
\begin{equation}\label{eq:HRest}
\gamma(\eta) \equiv \mathcal{H}_B(A_{\eta + 1}) \pmod{(B - 1)^2},
\end{equation}
where $\eta \geq \eta^*$.

The second objective of this section is to build an algorithm that searches for $\gamma(\eta + 1)$. We note $\gamma^j$ the $j^{th}$ possible candidate of $\gamma(\eta + 1)$.

\RestyleAlgo{boxed}
\begin{algorithm}[H]\label{alg:GammaEta}
\SetAlgoLined
\KwData{$R_0 = \mathcal{H}_B(A_{\eta + 1}) \in \mathbb{N}$ and $B \geq 3$}
\KwResult{$\gamma = (A^*_{j^*}, R_{j^*})$, where $j^* = \arg \min_j \gamma^j(\eta + 1)$}
\BlankLine
\lIf{$R_0 = a^2$ where $0 \leq a \leq B-2$}{\Return{$(\sqrt{R_0}, R_0)$}} 
$A_0^* \leftarrow A \leftarrow $ Algorithm\ref{alg:AMinimal}$(R_0, B)$ \\
$\gamma^0 \leftarrow (A_0^*, R_0)$ \\
$j \leftarrow 1$ \\
\While{$j \leq \left[\frac{3B}{2}\right] - 3$}
{
	$R_j \leftarrow R_0 + j(B-1)^2$ \\
	$A_j^* \leftarrow $ Algorithm\ref{alg:AMinimal}$(R_j, B)$ \\
	\If{$\left[\frac{A_j^*}{B^j}\right] < A$}
	{
		$\gamma^j \leftarrow (A_j^*, R_j)$ \\
		$A \leftarrow \left[\frac{A_j^*}{B^j}\right]$
	}
	$j \leftarrow j + 1$ \\
}
\Return{$\min_{j}\gamma^j(\eta + 1)$}
\caption{Find $R_{j^*}$ and $A^*_{j^*}$ such that $\gamma(\eta + 1) = \min_{j}\gamma^j(\eta + 1) = (A^*_{j^*}, R_{j^*})$ for an unknown $\eta$}
\end{algorithm}

\begin{theorem} \label{thm:GammaEtaAlgorithm}
The algorithm \ref{alg:GammaEta} ends and returns $\gamma(\eta + 1) = \min_{j}\gamma_j(\eta + 1)$ for an unknown $\eta$ and for any $\mathcal{H}_B(A_{\eta + 1}) = R_0 \geq 0$ where $B \geq 3$, $U_B = \left[\frac{3B}{2}\right] - 3$ and $j \in \{0, 1, 2, \ldots, U_B\}$.
\end{theorem}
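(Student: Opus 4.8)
The plan is to establish two things separately: termination (which is immediate from the loop bound) and correctness (that the returned value is genuinely $\gamma(\eta+1)$). For termination, I would simply observe that the \texttt{while} loop runs over $j \in \{1, 2, \ldots, U_B\}$ with $U_B = \left[\frac{3B}{2}\right] - 3$ a fixed finite integer depending only on $B$, and that each iteration calls Algorithm~\ref{alg:AMinimal}, which terminates by Theorem~\ref{thm:AMinimalAlgorithm}; hence the whole procedure halts after finitely many steps. The early-return branch when $R_0 = a^2$ for some $0 \le a \le B-2$ handles the degenerate case where $\gamma(\eta+1)$ has no trailing $(B-1)$-digits at all, i.e. $t_{\eta+1} = 0$, and $A_{\eta+1}$ is a single digit; this is correct because then $\mathcal{H}_B(\gamma(\eta+1)) = a^2 = R_0 = \gamma(\eta)$ and no smaller candidate can exist.

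For correctness in the main case, I would argue as follows. By Corollary~\ref{crl:GammaCanonicalForm}, since $\eta \ge \eta^*$, we have $\gamma(\eta+1) = (A_{\eta+1}+1)B^{t_{\eta+1}} - 1$ where $t_{\eta+1} = \frac{\gamma(\eta) - \mathcal{H}_B(A_{\eta+1})}{(B-1)^2}$, so $\gamma(\eta+1)$ is determined entirely by the pair $(A_{\eta+1}, t_{\eta+1})$, and writing $R = \mathcal{H}_B(A_{\eta+1})$ the congruence \eqref{eq:HRest} forces $R \equiv \gamma(\eta) \pmod{(B-1)^2}$. Thus the admissible values of $R$ are exactly $R_0, R_0 + (B-1)^2, R_0 + 2(B-1)^2, \ldots$, which is precisely the sequence $R_j = R_0 + j(B-1)^2$ swept by the loop. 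For each such $R_j$, Algorithm~\ref{alg:AMinimal} returns the minimal $A$ with $\mathcal{H}_B(A) = R_j$, and among all pairs producing a given $R_j$ the smallest $\gamma$ is obtained by taking this minimal $A$ together with the corresponding $t$; the algorithm then compares candidates across different $j$. The comparison is done via $\left[\frac{A_j^*}{B^j}\right] < A$: I would explain that since larger $j$ means $A_j^*$ has more leading digits (roughly $j$ more) before the block of $(B-1)$'s, dividing by $B^j$ normalizes the comparison so that the candidate with the lexicographically smallest leading part wins, and this correctly identifies $\arg\min_j \gamma^j(\eta+1)$.

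The remaining and most delicate point is why it suffices to let $j$ range only up to $U_B = \left[\frac{3B}{2}\right] - 3$ rather than over all nonnegative integers. Here I would invoke the digit bounds from Section~\ref{sec:Height}: by Corollary~\ref{crl:HeightLowerAlphaBound}, $\alpha_{\eta+1} \le 2B$, and more importantly as $R_j$ increases the minimal $A_j^*$ is forced to grow, so once $j$ exceeds a threshold the quantity $\left[\frac{A_j^*}{B^j}\right]$ can no longer decrease — intuitively, increasing $R$ by $(B-1)^2$ can "save" at most one digit position's worth of magnitude in $A$, and after about $\frac{3B}{2}$ steps the minimal $A$ required to realize $R_j$ has outgrown any benefit. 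Making this precise is the crux: I would bound $\mathcal{H}_B$ of the minimal $A$ realizing $R_j$ using lemma~2.1 of \cite{Mei16} together with Corollary~\ref{crl:HeightLowerTBound}, show that for $j > U_B$ the leading part $\left[\frac{A_j^*}{B^j}\right]$ is at least that of the best candidate found with $j \le U_B$, and conclude that truncating the loop at $U_B$ loses no minimal solution. I expect this cutoff estimate — reconciling the growth rate of $\min\{A : \mathcal{H}_B(A) = R_0 + j(B-1)^2\}$ against the factor $B^j$ — to be the main obstacle, and it is where the constant $\frac{3B}{2} - 3$ must be extracted carefully from the bounds $y_{\eta+1} \le L_{\eta+1} \le y_{\eta+1} + 4$ and $\alpha_{\eta+1} \le 2B$.
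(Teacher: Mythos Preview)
Your overall structure---termination from the finite loop, the role of Corollary~\ref{crl:GammaCanonicalForm} and the congruence~\eqref{eq:HRest} in generating the candidates $R_j = R_0 + j(B-1)^2$, the interpretation of the comparison $\left[A_j^*/B^j\right] < A$---matches the paper's proof closely. The paper also starts from the a~priori bound $U_B = 2B$ via Corollary~\ref{crl:HeightLowerAlphaBound} and then sharpens it.

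The gap is in the cutoff argument, which you correctly flag as the crux. The tools you propose (Corollary~\ref{crl:HeightLowerTBound}, Corollary~\ref{crl:HeightLowerAlphaBound}, and lemma~2.1 of~\cite{Mei16}) only yield $\alpha_{\eta+1} \le 2B$, i.e.\ $U_B = 2B$; they do not contain enough information to reach $\left[\frac{3B}{2}\right] - 3$. The paper obtains the sharper constant by a different route: it invokes \emph{Lagrange's four-square theorem} to conclude that $R_0 < (B-1)^2$ can be written as a sum of at most four squares of digits $\le B-2$, so $L(A_0^*) \le 4$ (and $L(A_0^*) = 1$ exactly when $R_0$ is a perfect square, which explains the early return). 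From there the paper tracks how the minimal length $L(A_j^*)$ grows with $j$: since each increment of $(B-1)^2$ in $R_j$ forces roughly $\frac{(B-1)^2}{(B-2)^2} = 1 + \frac{2B-3}{(B-2)^2}$ additional digits in $A_j^*$, the length climbs by one unit each time $j$ increases by about $\frac{(B-2)^2}{2B-3} \approx \frac{B}{2} - 1$, and $L(A_j^*) \ge 5$ once $j \ge \left\lceil \frac{3B}{2} \right\rceil - 3$, at which point no further candidate can beat $\gamma^0$.

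So the missing idea is Lagrange's theorem, used to pin down $L(A_0^*) \le 4$; without it your cited bounds will not produce the constant $\frac{3B}{2} - 3$.
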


\begin{proof}
Let $j \in \mathbb{N}$ be the iterations in the loop and $\eta$ an unknown height. In virtue of the theorem \ref{thm:ExistUniqueGamma}, we know that for all height $\eta$, there is a unique positive integer $\hat{\gamma}$ such that $\gamma(\eta + 1) = \hat{\gamma}$. Thus, we have to prove that $\hat{\gamma} \in \{\gamma^j(\eta + 1)\}_{j = 0}^{U_B}$. By corollary \ref{crl:HeightLowerAlphaBound}, we have $U_B = 2B$. However, we show that $U_B$ can be improved in order to reduce the number of iterations processed by the algorithm. It will then follow that $\hat{\gamma} \in \{\gamma^j(\eta + 1)\}_{j = 0}^{U_B}$.

By construction, the algorithm initializes $\gamma^0(\eta + 1) = (A_0^*, R_0)$, where $A = A_0^* = \min(E)$ per algorithm \ref{alg:AMinimal}. If we remove $j$ times the digit $B-1$, then using the corollary \ref{crl:GammaCanonicalForm} gives
\begin{align}
t_{\eta + 1} - j &= \frac{\gamma(\eta) - R_0}{(B-1)^2} - j \notag \\
				 &= \frac{\gamma(\eta) - (R_0 + j(B-1)^2)}{(B-1)^2}.
\end{align}
Let $R_j = R_0 + j(B-1)^2$. We apply the algorithm \ref{alg:AMinimal} on $R_j$ to output $A_j^*$ at iteration $j$ where $R_j = \mathcal{H}_B(A_j^*)$. Since we removed $j$ times the digit $B-1$, they have to be added back to $A_j^*$ by respecting the constraints $1 \leq a^*_i \leq B-2$. Lagrange's theorem states that every positive integer can be written as a sum of 4 squares. This implies that $1 \leq L(A_0^*) \leq 4$ where $L(A_0^*) = 1$ if and only if $R_0$ is a perfect square. In such case, $\hat{\gamma} = \gamma^0(\eta + 1) = (\sqrt{R_0}, R_0)$. 

Suppose that $R_0$ is not a perfect square. It follows that $2 \leq L(A_0^*) \leq 4$. Because $(B - 1)^2 > (B - 2)^2$, then the lower bound of $L(A_j^*)$ increases of $1$ when
\begin{equation}
2 + \frac{j(B-1)^2}{(B-2)^2} = 2 + \frac{j(2B-3)}{(B-2)^2} = 3
\end{equation}
or equivalently if 
\begin{equation}
j\frac{2B - 3}{(B-2)^2} = 1.
\end{equation}
Hence, once
\begin{equation} 
	j = \ceil*{\frac{(B-2)^2}{2B-3}} = \frac{B}{2} - 1.
\end{equation}
because $j \in \mathbb{N}$. Thus, $L(A_j^*) \geq 5$ once $j = \ceil*{\frac{3B}{2}} - 3$ which ensures that $\hat{\gamma} \not \in \{\gamma^j(\eta + 1)\}_{k = \ceil*{\frac{3B}{2}} - 3}^{\infty}$. Therefore, we deduce that 
\begin{equation} \label{ineq:MaxIterationUpperBound}
	U_B = \left[\frac{3B}{2}\right] - 3
\end{equation}
and $\hat{\gamma} \in \{\gamma^j(\eta + 1)\}_{j = 0}^{U_B}$.

We have to show that the algorithm returns $\hat{\gamma}$. If $\left[\frac{A_j^*}{B^j}\right] < A$, then $\gamma^j(\eta + 1) < \gamma^l(\eta + 1)$, where $0 \leq l < j$ and $\left[\frac{A_j^*}{B^j}\right]$ means we remove the last $j$ digits to $A_j$. Thus, we only keep the minimal solution $\gamma^j(\eta + 1)$, for iterations $l \leq j$, and we update $A = \left[\frac{A_j^*}{B^j}\right]$ to ensure that $A$ is always minimal. Therefore, there is $j$ such that the algorithm returns $\hat{\gamma} = \min_j \{\gamma^j(\eta + 1)\}_{j = 0}^{U_B}$ for all $B \geq 3$.
\end{proof}

The algorithms \ref{alg:AMinimal} and \ref{alg:GammaEta} have been implemented in the C++ language. The source code is available at \url{https://github.com/glapointe7/SmallestHappyNumbers}. To test the algorithm \ref{alg:GammaEta}, we validated our results with those of \cite{Tianxin08}(page 1924) in the decimal base.

In the table \ref{tbl:Comparison_UB_With_J}, we compared our upper bound $U_B$ to the number of iterations $j$ maximal, noted $J$, among integers $0 \leq R < (B-1)^2$ in order to obtain $\hat{\gamma}$ for bases $3 \leq B \leq 24$.
\newpage
\begin{table}[htpb]
\caption{Comparison of $U_B$ with $J$ for $3 \leq B \leq 24$.}
{\begin{tabular}{|l|c|r||l|c|r|}
  \hline
  $B$ & $J$ & $U_B$ & $B$ & $J$ & $U_B$ \\
  \hline
  3 & 0 & 1 & 14 & 6 & 18 \\ \hline
  4 & 1 & 3 & 15 & 6 & 19 \\ \hline
  5 & 1 & 4 & 16 & 6 & 21 \\ \hline
  6 & 2 & 6 & 17 & 7 & 22 \\ \hline
  7 & 2 & 7 & 18 & 7 & 24 \\ \hline
  8 & 3 & 9 & 19 & 7 & 25 \\ \hline
  9 & 3 & 10 & 20 & 7 & 27 \\ \hline
  10 & 5 & 12 & 21 & 8 & 28 \\ \hline
  11 & 6 & 13 & 22 & 10 & 30 \\ \hline
  12 & 4 & 15 & 23 & 10 & 31 \\ \hline
  13 & 5 & 16 & 24 & 8 & 33 \\ \hline
\end{tabular}}
\label{tbl:Comparison_UB_With_J}
\end{table}

We conclude this section on the following questions:
\begin{enumerate}
	\item Can we improve the time complexity of the algorithms \ref{alg:AMinimal} and \ref{alg:GammaEta}?
	\item According to the table \ref{tbl:Comparison_UB_With_J}, our upper bound $U_B$ could be sharper. How can we get a sharper upper bound $U_B$?
\end{enumerate}

\section{Smallest Happy Numbers in Binary Base} \label{sec:BinaryBase}
In this section, the objective is to obtain an equation that calculates $\gamma_2(\eta)$ for all $\eta$ using the corollary \ref{crl:GammaCanonicalForm}. After computing $\gamma_2(\eta)$ for $\eta \in \{0,1,2,3,4\}$ we get $\gamma_2(0) = (1)_2$, $\gamma_2(1) = (1,0)_2$, $\gamma_2(2) = (1,1)_2$, $\gamma_2(3) = (1,1,1)_2$ and $\gamma_2(4) = (1,1,1,1,1,1,1)_2$. We see that $\mathcal{H}_2(\gamma_2(\eta + 1)) = \gamma_2(\eta)$ for $\eta \in \{0, 1, 2, 3\}$. Moreover, the smallest height when $\gamma_2(\eta + 1) > 2(B-1)^3 + 5 = 7 = (1, 1, 1)_2$ is $\eta^* = 4$ per theorem \ref{thm:Recursive_Equation_With_Initial_Condition}. Thus, we showed that $\mathcal{H}_2(\gamma_2(\eta + 1)) = \gamma_2(\eta)$ for all $\eta \in \mathbb{N}$ in binary base. Therefore, we choose the initial height $\eta^* = 1$ to fit with the corollary \ref{crl:GammaEquationBinaryBase}.

\begin{corollary} \label{crl:GammaEquationBinaryBase}
For all $\eta \geq 1$, we have
\begin{equation} \label{eq:GammaBinary}
\gamma_2(\eta + 1) = 2^{\gamma_2(\eta)} - 1
\end{equation}
and its inverse
\begin{equation} \label{eq:GammaBinaryInverse}
\gamma_2(\eta) = \log_2(\gamma_2(\eta + 1) + 1).
\end{equation}
\end{corollary}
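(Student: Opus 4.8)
The plan is to obtain both formulas as a straightforward specialization of Corollary \ref{crl:GammaCanonicalForm} to the base $B = 2$. First I would record, following the discussion preceding the statement, that in binary the recursive identity $\mathcal{H}_2(\gamma_2(\eta+1)) = \gamma_2(\eta)$ holds for \emph{every} $\eta \geq 0$: it is checked directly for $\eta \in \{0,1,2,3\}$ from the listed values $\gamma_2(0) = 1$, $\gamma_2(1) = 2$, $\gamma_2(2) = 3$, $\gamma_2(3) = 7$, $\gamma_2(4) = 127$, and it follows from Theorem \ref{thm:Recursive_Equation_With_Initial_Condition} for $\eta \geq 4$, since $2(B-1)^3 + 5 = 7$ and $\gamma_2(4) = 127 > 7$ while $\gamma_2(3) = 7 \not> 7$. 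Consequently the hypothesis $\eta \geq \eta^*$ of Corollary \ref{crl:GammaCanonicalForm} is met with the initial height taken to be $\eta^* = 1$, i.e.\ for all $\eta \geq 1$, even though Theorem \ref{thm:Recursive_Equation_With_Initial_Condition} by itself only supplies the base-dependent threshold $\eta^* = 4$.

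Next I would evaluate the right-hand side of \eqref{eq:General_Gamma_Height} at $B = 2$. Here $(B-1)^2 = 1$, and $A_{\eta+1}$ is by definition the integer formed from the digits of $\gamma_2(\eta+1)$ that are strictly smaller than $B-1 = 1$; the only such digit is $0$. For $\eta \geq 1$ the number $\gamma_2(\eta+1)$ (with $\eta+1 \geq 2$) is a binary repunit $\gamma_2(\eta+1) = A_{\eta+1} * T_{\eta+1}$ with $A_{\eta+1}$ empty — true by the explicit values $\gamma_2(2) = (1,1)_2$, $\gamma_2(3) = (1,1,1)_2$, $\gamma_2(4) = (1,1,1,1,1,1,1)_2$, and by induction for $\eta+1 \geq 5$ since $\mathcal{H}_2(x)$ is the number of $1$-bits of $x$, so the minimal $x$ with $\mathcal{H}_2(x) = \gamma_2(\eta)$ is $2^{\gamma_2(\eta)} - 1$. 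Hence $A_{\eta+1} = 0$, $A_{\eta+1} + 1 = 1$, and $\mathcal{H}_2(A_{\eta+1}) = \mathcal{H}_2(0) = 0$. Substituting into \eqref{eq:General_Gamma_Height} gives
\[
\gamma_2(\eta+1) = (0+1)\, 2^{\frac{\gamma_2(\eta) - 0}{1}} - 1 = 2^{\gamma_2(\eta)} - 1,
\]
which is \eqref{eq:GammaBinary}. For the inverse \eqref{eq:GammaBinaryInverse} I would add $1$ to both sides and apply $\log_2$, which is legitimate since $\gamma_2(\eta+1) + 1 = 2^{\gamma_2(\eta)} > 0$.

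The only delicate point is the licence to take $\eta^* = 1$: this rests entirely on the hand computation of $\gamma_2(0),\dots,\gamma_2(4)$ that closes the gap below the threshold of Theorem \ref{thm:Recursive_Equation_With_Initial_Condition}, and it also explains why the hypothesis $\eta \geq 1$ cannot be relaxed, since $\gamma_2(1) = (1,0)_2 = 2 \neq 1 = 2^{\gamma_2(0)} - 1$ (equivalently, $\gamma_2(1)$ is not a repunit, so its digit decomposition $A * T$ fails and Corollary \ref{crl:GammaCanonicalForm} genuinely does not apply at $\eta = 0$). A secondary bookkeeping check is the claim $A_{\eta+1} = 0$ for all $\eta \geq 1$, i.e.\ that $\gamma_2(\eta+1)$ has no zero digit once $\eta \geq 1$; I handle this as above via the minimality characterization of $\gamma_2$ together with the fact that $\mathcal{H}_2$ counts $1$-bits. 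Everything else is an algebraic simplification.
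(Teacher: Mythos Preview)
Your proposal is correct and follows essentially the same route as the paper: reduce to Corollary~\ref{crl:GammaCanonicalForm} at $B=2$ with $A_{\eta+1}=0$, after extending the recursion $\mathcal{H}_2(\gamma_2(\eta+1))=\gamma_2(\eta)$ below the threshold of Theorem~\ref{thm:Recursive_Equation_With_Initial_Condition} by the direct computation of $\gamma_2(0),\dots,\gamma_2(4)$. Your write-up is more explicit than the paper's terse ``$A_{\eta+1}=0$ because $B=2$'' and also explains why the hypothesis $\eta\geq 1$ is sharp; note, incidentally, that your repunit/minimality argument for $A_{\eta+1}=0$ already yields $\gamma_2(\eta+1)=2^{\gamma_2(\eta)}-1$ directly, so the subsequent appeal to Corollary~\ref{crl:GammaCanonicalForm} is really just a formality.
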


\begin{proof}
For $\eta = 1,2$ the result is obvious. For $\eta \geq \eta^*$, we can apply the corollary \ref{crl:GammaCanonicalForm} where $A_{\eta + 1} = 0$ because $B = 2$. Therefore, the equation \eqref{eq:GammaBinary} is directly obtained and then, we deduce the equation \eqref{eq:GammaBinaryInverse}.
\end{proof}

Using the tetration, which is the iterated exponentiation, the equation \eqref{eq:GammaBinary} can be defined analytically. We present a new notation inspired from \cite{Knuth76} to simplify the notations.

\begin{definition} \label{dfn:KnuthAdapted}
Let $k, x, y, z \in \mathbb{R}$ and $n \in \mathbb{N}$. We define the adapted Knuth's up-arrow notation as being
\begin{equation}
kxy^{xy^{\cdot^{\cdot^{xy^z}}}} = k(xy \uparrow\uparrow n)^z,
\end{equation}
where $k$ is a scalar and $xy$ is repeated $n$ times. The real multiplication operator is used between $k,x$ and $y$. If $n = 0$, then $k(xy \uparrow\uparrow n)^z = k$.
\end{definition}

Knowing that the initial condition is $\gamma_2(\eta^*) = 2 = (1, 0)_2$, we have 
\begin{equation}
	\gamma_2(2) = 2^{\gamma_2(1)} - 1 = 2^2 - 1 = 2^{3 - 1} - 1 = 2^{-1}2^3 - 1 = (2^{-1}2 \uparrow\uparrow 1)^3 - 1.
\end{equation}
If we continue with one more iteration, we get
\begin{equation}
	\gamma_2(3) = 2^{\gamma_2(2)} - 1 = 2^{2^{-1}2^3 - 1} - 1 = 2^{-1}2^{2^{-1}2^3} - 1 = (2^{-1}2 \uparrow\uparrow 2)^3 - 1.
\end{equation}
Using the induction on $\eta > \eta^*$, we obtain the equation
\begin{equation} \label{eq:GammaBinarySolved}
\gamma_2(\eta) = 2^{-1}2^{2^{-1}2^{\cdot^{\cdot^{2^{-1}2^3}}}} - 1 = (2^{-1}2 \uparrow\uparrow (\eta - \eta^*))^3 - 1.
\end{equation}

Recall that a Mersenne's prime number $m$ is a prime number $m = 2^p - 1$, where $p$ is prime. In virtue of the corollary \ref{crl:GammaEquationBinaryBase}, we note that $\{\gamma_2(\eta + 1) : \eta \geq 1\}$ is a possible subset of the Mersenne's prime number set. In particular, $\gamma_2(\eta)$ is a Mersenne's prime number for $\eta \in \{2,3,4,5\}$ which are given by \cite{Sloane64}. This leads us to the following conjecture because nowadays, we cannot tell if $\gamma_2(6) = 2^{2^{127} - 1} - 1$ is a Mersenne's prime number or not. This holds also for all $\eta > 6$.

\begin{conjecture}
For all $\eta \geq 2$, $\gamma_2(\eta)$ is a Mersenne's prime number.
\end{conjecture}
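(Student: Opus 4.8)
The natural line of attack is induction on $\eta$ via the recursion $\gamma_2(\eta+1) = 2^{\gamma_2(\eta)} - 1$ supplied by corollary \ref{crl:GammaEquationBinaryBase}. The base cases are checked by hand: $\gamma_2(2) = 2^2 - 1$, $\gamma_2(3) = 2^3 - 1$, $\gamma_2(4) = 2^7 - 1$ and $\gamma_2(5) = 2^{127} - 1$ all have prime exponent and are all known to be prime, the last being the classical Mersenne prime $M_{127}$ established by Lucas. For the inductive step one assumes $\gamma_2(\eta)$ is a Mersenne prime; then $\gamma_2(\eta)$ is in particular prime, so the exponent of $\gamma_2(\eta+1) = 2^{\gamma_2(\eta)} - 1$ is prime, which is exactly the necessary condition for a number of Mersenne form to be a Mersenne prime. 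It then remains to prove that $2^{\gamma_2(\eta)} - 1$ is itself prime.

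That final step is the entire difficulty, and I do not expect it to be dischargeable. The sequence defined by $c_1 = \gamma_2(1) = 2$ and $c_{\eta+1} = 2^{c_\eta} - 1$, namely $2,\,3,\,7,\,127,\,2^{127}-1,\,2^{2^{127}-1}-1,\ldots$, is precisely the Catalan--Mersenne sequence, and the claim that every term is prime is the long-standing, still-open Catalan--Mersenne conjecture. For each fixed $\eta$ one could in principle run the Lucas--Lehmer test on the exponent $p = \gamma_2(\eta)$: set $s_0 = 4$ and $s_{k+1} \equiv s_k^2 - 2 \pmod{2^p - 1}$, then verify $s_{p-2} \equiv 0$. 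But already for $\eta = 6$ the exponent $p = 2^{127} - 1 \approx 1.7 \times 10^{38}$ is hopelessly large for such a computation, and the exponents grow tetrationally thereafter; no structural shortcut to the primality of these towers is known. So the statement would be verified only for $\eta \in \{2,3,4,5\}$ and would remain open for $\eta \ge 6$.

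I would also be candid that the prevailing heuristics point against the conjecture rather than toward it: modelling each prime exponent $p$ as producing a prime $2^p - 1$ with ``probability'' of order $(\log p)^{-1}$, in the spirit of the Lenstra--Pomerance--Wagstaff heuristic, the expected number of primes among $\gamma_2(6),\gamma_2(7),\ldots$ is of order $\sum_{\eta \ge 5} 1/\gamma_2(\eta)$, which converges with astonishing speed and is numerically minuscule, so heuristically the tower ends at $\gamma_2(5)$. A realistic research target is therefore not a proof of the conjecture but rather a refined heuristic estimate of the probability that $\gamma_2(6) = 2^{2^{127}-1} - 1$ is prime, together with a trial-division or algebraic search for a nontrivial factor of $\gamma_2(6)$, since exhibiting any such factor would settle the conjecture in the negative. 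In short, the main obstacle is unavoidable: proving the statement is equivalent to resolving the Catalan--Mersenne conjecture, which lies well beyond current reach.
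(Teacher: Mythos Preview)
Your assessment is correct and matches the paper's own stance: the paper does not prove this statement at all, but explicitly labels it a \emph{conjecture}, noting that ``nowadays, we cannot tell if $\gamma_2(6) = 2^{2^{127}-1}-1$ is a Mersenne's prime number or not.'' Your identification of the sequence $\gamma_2(\eta)$ with the Catalan--Mersenne sequence, and of the conjecture with the open Catalan--Mersenne problem, is exactly right and in fact more informative than what the paper says.
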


\section{Smallest Happy Numbers in Ternary Base} \label{sec:GammaBase3}
In this section, the objective is to find an equation that finds $\gamma_3(\eta)$ for all $\eta > \eta^*$ by using the relation \eqref{eq:HRest}. We have to calculate the remainder of $\frac{\gamma_3(\eta)}{4}$ and find $\eta > \eta^*$ such that 
\begin{equation}
	\gamma_3(\eta + 1) = (A_{\eta + 1}+1) \cdot 3^{\frac{\gamma_3(\eta) - \mathcal{H}_3(A_{\eta + 1})}{4}} - 1.
\end{equation}
By brute-force, we obtain that $\gamma_3(2) = (1,1,1)_3$, $\gamma_3(3) = 2 \cdot 3^3 - 1$ and  $\gamma_3(4) = 2 \cdot 3^{13} - 1$. In virtue of the theorem \ref{thm:Recursive_Equation_With_Initial_Condition}, the smallest height when $\gamma_3(\eta) > 2(B-1)^3 + 5 = 21 = (2, 1, 0)_3$ is $\eta^* = 3$. We showed that $\mathcal{H}_3(\gamma_3(\eta + 1)) = \gamma_3(\eta)$ for all $\eta \in \mathbb{N}$. Then, we choose $\eta^* = 2$ to fit with the lemma \ref{lem:GammaBase3General}.

By definition of $A_{\eta+1}$, the only digit that can be contained in $A_{\eta+1}$ is one in ternary base. After applying the algorithm \ref{alg:GammaEta} for $\mathcal{H}_3(A_{\eta + 1}) < (B-1)^2 = 4$, we found that $A_{\eta+1} \leq (1,1,1)_3$. Let $\mathcal{A}_3$ denotes the set of all possible values of $A_{\eta+1}$ in ternary base where
\begin{equation}\label{eq:A3SetNotReduced}
\mathcal{A}_3 = \{(0)_3, (1)_3, (1,1)_3, (1,1,1)_3\}.
\end{equation}

In order to find $\gamma_3(5)$, we calculate $\mathcal{H}_3(A_5)$ by using the relation \eqref{eq:HRest}. We find that the rest of the division of $\gamma_3(4)$ by $4$ is $3188645 \equiv 1 \pmod{4}$. Thus, $\mathcal{H}_B(A_5) = 1$ and then, $A_5+1 = 2$. Therefore, 
\begin{equation}\label{eq:Gamma5Base3}
\gamma_3(5) = 2 \cdot 3^{\frac{\gamma_3(4) - 1}{4}} - 1. 
\end{equation}

Note that $\gamma_3(\eta) = 2 \cdot 3^t - 1$ for $\eta = 3,4,5$. Using induction on $t \in \mathbb{N}$, we get that $3^t \equiv 1$ or $3 \pmod{4}$. Thus, 
\begin{equation}\label{eq:GammaBase3General}
2 \cdot 3^t - 1 \equiv 1 \pmod{4}
\end{equation}
for all $t \in \mathbb{N}$.

We see that $\gamma_3(\eta) \equiv 1 \pmod{4}$ for $\eta = 3,4,5$. Let's generalize the relation \eqref{eq:GammaBase3General} for all $\eta > \eta^*$.

\begin{lemma}\label{lem:GammaBase3General}
For all $\eta \geq \eta^*$ in ternary base, we have
\begin{equation}\label{eq:GammaBase3All}
\gamma_3(\eta + 1) = 2 \cdot 3^{\frac{\gamma_3(\eta) - 1}{4}} - 1.
\end{equation}
\end{lemma}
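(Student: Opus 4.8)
The plan is to argue by induction on $\eta \geq \eta^*$ that $\gamma_3(\eta)$ has the canonical form $2\cdot 3^{t_\eta} - 1$ for some $t_\eta \in \mathbb{N}$, i.e. that $A_{\eta+1} = (1)_3$ for every $\eta \geq \eta^*$, and simultaneously that $\gamma_3(\eta) \equiv 1 \pmod 4$. The base case is already in hand: the excerpt records $\gamma_3(2) = (1,1,1)_3$, $\gamma_3(3) = 2\cdot 3^3 - 1$, $\gamma_3(4) = 2\cdot 3^{13}-1$, all of which are $\equiv 1 \pmod 4$ by the computation $2\cdot 3^t - 1 \equiv 1 \pmod 4$ established in equation \eqref{eq:GammaBase3General} for all $t$. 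For the inductive step I would assume $\gamma_3(\eta) = 2\cdot 3^{t_\eta} - 1 \equiv 1 \pmod 4$ and deduce the value of $\gamma_3(\eta+1)$ from Corollary \ref{crl:GammaCanonicalForm}, which applies since $\eta \geq \eta^*$.

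The key mechanism is the congruence \eqref{eq:HRest}: $\gamma_3(\eta) \equiv \mathcal{H}_3(A_{\eta+1}) \pmod{(B-1)^2 = 4}$. By the induction hypothesis $\gamma_3(\eta) \equiv 1 \pmod 4$, so we need $\mathcal{H}_3(A_{\eta+1}) \equiv 1 \pmod 4$. Now $A_{\eta+1}$ can only contain the digit $1$ in ternary (digits lower than $B-1 = 2$, and nonzero), so $A_{\eta+1} \in \mathcal{A}_3 = \{(0)_3, (1)_3, (1,1)_3, (1,1,1)_3\}$ as in \eqref{eq:A3SetNotReduced}, giving $\mathcal{H}_3(A_{\eta+1}) \in \{0,1,2,3\}$ respectively. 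The only member of $\mathcal{A}_3$ with $\mathcal{H}_3$-value $\equiv 1 \pmod 4$ is $A_{\eta+1} = (1)_3$, with $\mathcal{H}_3(A_{\eta+1}) = 1$. Since Algorithm \ref{alg:AMinimal} / \ref{alg:GammaEta} selects the \emph{minimal} $A_{\eta+1}$ achieving the required residue (and the correctness of that selection is Theorems \ref{thm:AMinimalAlgorithm} and \ref{thm:GammaEtaAlgorithm}), we conclude $A_{\eta+1} = (1)_3$, hence $A_{\eta+1} + 1 = 2$. Plugging $\mathcal{H}_3(A_{\eta+1}) = 1$ and $A_{\eta+1}+1 = 2$ into \eqref{eq:General_Gamma_Height} with $B = 3$ yields exactly
\[
\gamma_3(\eta+1) = 2 \cdot 3^{\frac{\gamma_3(\eta) - 1}{4}} - 1,
\]
which is \eqref{eq:GammaBase3All}. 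Finally, this new value is again of the form $2\cdot 3^{t_{\eta+1}} - 1$ with $t_{\eta+1} = (\gamma_3(\eta)-1)/4 \in \mathbb{N}$, and by \eqref{eq:GammaBase3General} it is $\equiv 1 \pmod 4$, closing the induction.

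The main obstacle is justifying that $A_{\eta+1}$ is forced to be $(1)_3$ rather than merely congruent-compatible: one must be careful that the exponent $t_{\eta+1} = (\gamma_3(\eta) - \mathcal{H}_3(A_{\eta+1}))/4$ is actually a nonnegative integer — this is where the residue condition \eqref{eq:HRest} is essential, and where the induction hypothesis $\gamma_3(\eta)\equiv 1\pmod 4$ does the real work — and that among the residue-compatible candidates in $\mathcal{A}_3$ the minimal one is selected, which is precisely the content carried over from the algorithm correctness theorems. A secondary subtlety is the bookkeeping of the shifted initial height: the excerpt deliberately sets $\eta^* = 2$ "to fit with the lemma", so one should check the statement is vacuously or directly true at the boundary $\eta = \eta^* = 2$ (it is, since $\gamma_3(2) = (1,1,1)_3$ and $2\cdot 3^{(\gamma_3(2)-1)/4} - 1 = 2\cdot 3^3 - 1 = \gamma_3(3)$), so no off-by-one issue arises. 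Everything else is routine arithmetic mod $4$ and substitution into the already-proved Corollary \ref{crl:GammaCanonicalForm}.
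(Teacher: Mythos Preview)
Your proposal is correct and follows essentially the same approach as the paper's proof: induction on $\eta$, using the form $2\cdot 3^t - 1$ together with the congruence \eqref{eq:GammaBase3General} to force $\gamma_3(\eta)\equiv 1\pmod 4$, then reading off $A_{\eta+1}=(1)_3$ from the list $\mathcal{A}_3$ and substituting into Corollary~\ref{crl:GammaCanonicalForm}. The only minor difference is that you invoke Theorems~\ref{thm:AMinimalAlgorithm} and~\ref{thm:GammaEtaAlgorithm} to justify the ``minimal'' choice of $A_{\eta+1}$, whereas the paper (and in fact the arithmetic) needs no minimality argument at all: among the four elements of $\mathcal{A}_3$ the value $\mathcal{H}_3(A_{\eta+1})\equiv 1\pmod 4$ is achieved \emph{uniquely} by $A_{\eta+1}=(1)_3$, so the algorithm correctness results are superfluous here.
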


\begin{proof}
We proceed by induction on $\eta$. For $\eta = \eta^*$, we have $\gamma(3) = 2 \cdot 3^{3} - 1$ where $3 = \frac{\gamma_3(2) - 1}{4}$ with $\gamma(2) = 13 = (1, 1, 1)_3$.
Suppose that $\gamma_3(\eta + 1) = 2 \cdot 3^{\frac{\gamma_3(\eta) - 1}{4}} - 1$ for a certain $\eta > \eta^*$. We have $\gamma_3(\eta + 2) = (A_{\eta + 2}+1) \cdot 3^{\frac{\gamma_3(\eta + 1) - \mathcal{H}_3(A_{\eta + 2})}{4}} - 1$. Using the induction hypothesis and knowing that $\frac{\gamma_3(\eta) - 1}{4} \in \mathbb{N}$, we deduce that $\gamma_3(\eta + 1)$ is written in the same form as described by the relation \eqref{eq:GammaBase3General} where $t = \frac{\gamma_3(\eta) - 1}{4}$. Thus, $\gamma_3(\eta + 1) \equiv 1 \pmod{4}$ and then $\mathcal{H}_3(A_{\eta + 2}) \equiv 1 \pmod{4}$. In virtue of the set of possible integers in $A_{\eta + 1}$ \eqref{eq:A3SetNotReduced}, we get $(A_{\eta + 2}+1) = 2$. Therefore, $\gamma_3(\eta + 2) = 2 \cdot 3^{\frac{\gamma_3(\eta + 1) - 1}{4}} - 1$ which shows that the equation \eqref{eq:GammaBase3All} holds for all $\eta \geq \eta^*$.
\end{proof}

The equation \eqref{eq:GammaBase3All} can be solved using the same method as the one we applied for the binary base in section \ref{sec:BinaryBase}. Before solving the equation \eqref{eq:GammaBase3All}, let's see an example for $\gamma_3(3)$ and for $\gamma_3(4)$ in order to get an intuition on the general solution. Knowing that $\gamma_3(2) = 13$, we have
\begin{align} \label{eq:Gamma3_3_Solved}
	\gamma_3(3) &= 2 \cdot 3^{\frac{\gamma_3(2) - 1}{4}} - 1 \notag \\
				&= 2 \cdot 3^{\frac{13 - 1}{4}} - 1 \notag \\
				&= 2 \cdot 3^{\frac{(14 - 1) - 1}{4}} - 1 \notag \\
				&= 2 \cdot 3^{\frac{-1}{2}} \cdot 3^{\frac{7}{2}} - 1.
\end{align}
Then, for $\gamma_3(4)$ we have
\begin{align} \label{eq:Gamma3_4_Solved}
	\gamma_3(4) &= 2 \cdot 3^{\frac{\gamma_3(3) - 1}{4}} - 1 \notag \\
				&= 2 \cdot 3^{\frac{(2 \cdot 3^{\frac{-1}{2}} \cdot 3^{\frac{7}{2}} - 1) - 1}{4}} - 1 \notag \\
				&= (2 \cdot 3^{\frac{-1}{2}}) \cdot 3^{(\frac{1}{2} \cdot 3^{\frac{-1}{2}}) \cdot 3^{\frac{7}{2}}} - 1.
\end{align}
Continuing with $\gamma_3(5)$ we would see the pattern $\frac{1}{2} \cdot 3^{\frac{-1}{2}} \cdot 3$ repeating. However, in order to fit with this pattern, the coefficient $2$ in \eqref{eq:Gamma3_3_Solved} and \eqref{eq:Gamma3_4_Solved} has to be written $4 \cdot \frac{1}{2}$ instead.

\begin{theorem} \label{thm:GammaBase3All}
For all $\eta > \eta^*$, we have 
\begin{equation}\label{eq:GammaBase3Height}
\gamma_3(\eta) = 4\left(\frac{3^{\frac{-1}{2}}}{2} 3 \uparrow\uparrow (\eta - \eta^*)\right)^{\frac{7}{2}} - 1.
\end{equation}
\end{theorem}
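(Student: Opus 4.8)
The plan is to prove the closed form \eqref{eq:GammaBase3Height} by induction on $\eta > \eta^*$, mirroring the tetration argument used for the binary base in Section \ref{sec:BinaryBase}. The engine of the proof is Lemma \ref{lem:GammaBase3General}, which gives the clean recursion $\gamma_3(\eta + 1) = 2 \cdot 3^{(\gamma_3(\eta) - 1)/4} - 1$ for all $\eta \geq \eta^*$. The whole task is to verify that the right-hand side of \eqref{eq:GammaBase3Height} satisfies this recursion with the correct initial value, so everything reduces to bookkeeping with the adapted Knuth notation of Definition \ref{dfn:KnuthAdapted}.

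First I would set up the notation: write $C = \tfrac{1}{2} 3^{-1/2} \cdot 3$ for the repeating block observed in the worked examples \eqref{eq:Gamma3_3_Solved} and \eqref{eq:Gamma3_4_Solved}, and let $f(n) = 4\left(C \uparrow\uparrow n\right)^{7/2} - 1$, so that the claim is $\gamma_3(\eta) = f(\eta - \eta^*)$ for $\eta > \eta^*$. The base case is $\eta = \eta^* + 1 = 3$: here $n = 1$, and unwinding the definition of $\uparrow\uparrow$ gives $f(1) = 4 \cdot C^{7/2} - 1 = 4 \cdot (\tfrac{1}{2} 3^{-1/2} \cdot 3)^{7/2} - 1$, which should collapse to $2 \cdot 3^{(13-1)/4} - 1 = \gamma_3(3)$ exactly as displayed in \eqref{eq:Gamma3_3_Solved} after rewriting the leading $2$ as $4 \cdot \tfrac{1}{2}$ as the paragraph preceding the theorem suggests. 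For the inductive step, assume $\gamma_3(\eta) = f(\eta - \eta^*)$; then by Lemma \ref{lem:GammaBase3General},
\begin{equation}
\gamma_3(\eta + 1) = 2 \cdot 3^{\frac{\gamma_3(\eta) - 1}{4}} - 1 = 2 \cdot 3^{\frac{4(C \uparrow\uparrow (\eta - \eta^*))^{7/2} - 2}{4}} - 1 = 2 \cdot 3^{-1/2} \cdot 3^{(C \uparrow\uparrow (\eta - \eta^*))^{7/2}} - 1,
\end{equation}
and one checks that $2 \cdot 3^{-1/2} \cdot 3^{(C \uparrow\uparrow m)^{7/2}} = 4\left(C \uparrow\uparrow (m+1)\right)^{7/2}$ by peeling one layer off the tetration tower, which yields $f(\eta + 1 - \eta^*)$ and closes the induction.

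The step I expect to be the main obstacle — or at least the one requiring the most care — is matching the bookkeeping of the scalar prefactor against the $\uparrow\uparrow$ definition. Definition \ref{dfn:KnuthAdapted} encodes $k(xy \uparrow\uparrow n)^z = kxy^{xy^{\cdot^{\cdot^{xy^z}}}}$ with the block $xy$ repeated $n$ times, so here $k = 4$, $x = \tfrac{1}{2}$, $y = 3^{-1/2} \cdot 3 = 3^{1/2}$ (or however one chooses to split $C$ into its $x,y$ parts), and $z = \tfrac{7}{2}$; one must confirm that $3^{-1/2}$ sits in the right place each time a layer is added, i.e. that the identity $2 \cdot 3^{-1/2} \cdot 3^{E} - 1 = 4\left(\tfrac{1}{2} 3^{-1/2} \cdot 3 \uparrow\uparrow (m+1)\right)^{7/2} - 1$ holds when $E = (\tfrac{1}{2} 3^{-1/2} \cdot 3 \uparrow\uparrow m)^{7/2}$. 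This is purely mechanical once the $-1/2$ in the exponent of $3^{(\gamma_3(\eta)-1)/4}$ is tracked correctly — exactly the $\frac{(14-1)-1}{4} = \frac{-1}{2} + \frac{7}{2}$ manipulation shown in \eqref{eq:Gamma3_3_Solved} — so I would present it compactly as the induction step rather than belabor the exponent arithmetic.

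\begin{proof}
We proceed by induction on $\eta > \eta^*$. For $\eta = \eta^* + 1 = 3$, we have $\eta - \eta^* = 1$, so by Definition \ref{dfn:KnuthAdapted},
\begin{equation}
4\left(\frac{3^{-1/2}}{2} 3 \uparrow\uparrow 1\right)^{7/2} - 1 = 4 \cdot \frac{3^{-1/2}}{2} \cdot 3^{7/2} - 1 = 2 \cdot 3^{-1/2} \cdot 3^{7/2} - 1 = 2 \cdot 3^{3} - 1 = \gamma_3(3),
\end{equation}
where the last equality uses $\gamma_3(3) = 2 \cdot 3^{(\gamma_3(2) - 1)/4} - 1$ with $\gamma_3(2) = 13$, as in \eqref{eq:Gamma3_3_Solved}.

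Suppose that $\gamma_3(\eta) = 4\left(\tfrac{3^{-1/2}}{2} 3 \uparrow\uparrow (\eta - \eta^*)\right)^{7/2} - 1$ for a certain $\eta > \eta^*$. Since $\eta \geq \eta^*$, Lemma \ref{lem:GammaBase3General} gives $\gamma_3(\eta + 1) = 2 \cdot 3^{(\gamma_3(\eta) - 1)/4} - 1$. Substituting the induction hypothesis, writing $E = \left(\tfrac{3^{-1/2}}{2} 3 \uparrow\uparrow (\eta - \eta^*)\right)^{7/2}$, we get
\begin{equation}
\gamma_3(\eta + 1) = 2 \cdot 3^{\frac{4E - 2}{4}} - 1 = 2 \cdot 3^{-1/2} \cdot 3^{E} - 1 = 4 \cdot \frac{3^{-1/2}}{2} \cdot \left(3^{E^{2/7}}\right)^{7/2} - 1.
\end{equation}
By Definition \ref{dfn:KnuthAdapted}, appending one more factor $\tfrac{3^{-1/2}}{2} \cdot 3 = E^{2/7}$-block inside the tower gives $\tfrac{3^{-1/2}}{2} \cdot 3^{E^{2/7}} = \tfrac{3^{-1/2}}{2} 3 \uparrow\uparrow (\eta - \eta^* + 1)$ raised to the appropriate power, hence
\begin{equation}
\gamma_3(\eta + 1) = 4\left(\frac{3^{-1/2}}{2} 3 \uparrow\uparrow (\eta + 1 - \eta^*)\right)^{7/2} - 1.
\end{equation}
This shows that \eqref{eq:GammaBase3Height} holds for $\eta + 1$, completing the induction.
\end{proof}
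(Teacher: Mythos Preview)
Your overall plan is exactly the paper's: induction on $\eta$, with Lemma \ref{lem:GammaBase3General} supplying the recursion and Definition \ref{dfn:KnuthAdapted} handling the tetration bookkeeping. The base case and the first half of the inductive step (arriving at $2\cdot 3^{-1/2}\cdot 3^{E}-1$ with $E=\left(\tfrac{3^{-1/2}}{2}\,3\uparrow\uparrow(\eta-\eta^*)\right)^{7/2}$) are correct and match the paper.

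The displayed manipulation that follows, however, is wrong. You write
\[
2\cdot 3^{-1/2}\cdot 3^{E}-1 \;=\; 4\cdot \frac{3^{-1/2}}{2}\cdot\bigl(3^{E^{2/7}}\bigr)^{7/2}-1,
\]
but $\bigl(3^{E^{2/7}}\bigr)^{7/2}=3^{\frac{7}{2}E^{2/7}}$, which equals $3^{E}$ only when $E^{5/7}=\tfrac{7}{2}$, a single value rather than all $\eta$. The subsequent sentence then asserts $\tfrac{3^{-1/2}}{2}\cdot 3 = E^{2/7}$, equating a constant to an $\eta$-dependent quantity, so the ``peeling one layer'' justification collapses.

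The fix is the step the paper actually takes: once you have $4\cdot\tfrac{3^{-1/2}}{2}\cdot 3^{E}-1$, apply Definition \ref{dfn:KnuthAdapted} directly. With $x=\tfrac{3^{-1/2}}{2}$, $y=3$, $z=\tfrac{7}{2}$, the definition unwinds as the recursion
\[
\bigl(xy\uparrow\uparrow(n+1)\bigr)^{z} \;=\; x\cdot y^{\,(xy\uparrow\uparrow n)^{z}}.
\]
Since $E=(xy\uparrow\uparrow(\eta-\eta^*))^{z}$, this gives $\tfrac{3^{-1/2}}{2}\cdot 3^{E}=\bigl(\tfrac{3^{-1/2}}{2}\,3\uparrow\uparrow(\eta-\eta^*+1)\bigr)^{7/2}$ immediately, and the induction closes without any $E^{2/7}$ detour.
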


\begin{proof}
We proceed by induction on $\eta$. Since $\eta^* = 2$ in ternary base, we have by definition \ref{dfn:KnuthAdapted} that $4\left(\frac{3^{\frac{-1}{2}}}{2} 3 \uparrow\uparrow 1\right)^{\frac{7}{2}} - 1 = 4\left(\frac{3^{\frac{-1}{2}}}{2} 3^{\frac{7}{2}} \right) - 1 = 2 \cdot 3^3 - 1 = \gamma_3(3)$. Suppose that the equation \eqref{eq:GammaBase3Height} is verified for a certain $\eta > \eta*$. In virtue of the lemma \ref{lem:GammaBase3General}, we have
\begin{equation} \label{eq:Gamma3_Proof_First_Step}
	\gamma_3(\eta + 1) = 2 \cdot 3^{\frac{\gamma_3(\eta) - 1}{4}} - 1.
\end{equation}
We apply the induction hypothesis on the equation \eqref{eq:Gamma3_Proof_First_Step} and obtain
\begin{align}
	\gamma_3(\eta + 1) &= 2 \cdot 3^{\frac{4\left(\frac{3^{\frac{-1}{2}}}{2} 3 \uparrow\uparrow (\eta - \eta^*)\right)^{\frac{7}{2}} - 1 - 1}{4}} - 1 \notag \\ 
	 				   &= 2 \cdot 3^{\left(\frac{3^{\frac{-1}{2}}}{2} 3 \uparrow\uparrow (\eta - \eta^*)\right)^{\frac{7}{2}} - \frac{1}{2}} - 1 \notag \\ 
	                   &= 4 \frac{3^{\frac{-1}{2}}}{2}3^{\left(\frac{3^{\frac{-1}{2}}}{2} 3 \uparrow\uparrow (\eta - \eta^*)\right)^{\frac{7}{2}}} - 1. 
\end{align}
In virtue of the definition \ref{dfn:KnuthAdapted}, it follows that 
\begin{equation}
	\gamma_3(\eta + 1) = 4\left(\frac{3^{\frac{-1}{2}}}{2} 3 \uparrow\uparrow (\eta - \eta^* + 1)\right)^{\frac{7}{2}} - 1.
\end{equation}
Therefore, the equation \eqref{eq:GammaBase3Height} is verified for all $\eta > \eta^*$.
\end{proof}

We conclude this section with the following questions:
\begin{enumerate}
	\item If this method can be generalized, what would be the general equation solving the equation \eqref{eq:General_Gamma_Height} for all $B \geq 2$ and $\eta_B > \eta_B^*$?
	\item If the method cannot be generalized, what method would solve the equation \eqref{eq:General_Gamma_Height} for all $B \geq 2$ and $\eta_B > \eta_B^*$? 
\end{enumerate}

\section*{Acknowledgments}
I want to thank Maxime Toussaint, candidate to Ph.D in Computer Science at University of Sherbrooke, for his time to review this paper and his valuable advices.

\end{document}